 \newtheorem{thm}{Theorem}[section]
 \newtheorem{lem}[thm]{Lemma}
 \theoremstyle{definition}
 \newtheorem{defn}[thm]{Definition}
 \theoremstyle{remark}
 \newtheorem{rem}[thm]{Remark}
 \numberwithin{equation}{section}
\newcommand{\thmref}[1]{Theorem~\ref{#1}}
\newcommand{\lemref}[1]{Lemma~\ref{#1}}
\newcommand{\defnref}[1]{Definition~\ref{#1}}
\renewcommand{\Re}{\operatorname{Re}}
\renewcommand{\Im}{\operatorname{Im}}
\begin{document}

%
%
%
%
%
%
%
%
%

\title[Newton--Hurwitz--Sierpinski]
{The Newton approximation, the Hurwitz continued fraction, and the Sierpinski series for relatively quadratic units over certain imaginary quadratic number fields}

\author[A. Saito]{Asaki Saito}
\address{Department of Complex and Intelligent Systems\\
Future University Hakodate\\
116-2 Kamedanakano-cho, Hakodate, Hokkaido 041-8655\\
Japan}
\email{saito@fun.ac.jp}

\author[J.-I. Tamura]{Jun-Ichi Tamura}
\address{Institute for Mathematics and Computer Science\\
Tsuda College\\
2-1-1 Tsuda-machi, Kodaira, Tokyo 187-8577\\
Japan}
\email{jtamura@tsuda.ac.jp}

\subjclass{Primary 11A55, 30B70; Secondary 49M15}

\keywords{Complex continued fraction, ascending continued fraction, Newton approximation, relatively quadratic unit}


\begin{abstract}
The objective of this paper is to show (a)=(b)=(c) as
rational functions of $T$, $U$ for (a), (b), (c) given by
(a) continued fractions of length $2^{n+1}-1$ with explicit partial
denominators in $\left\{-T,U^{-1}T\right\}$,
(b) truncated series $\sum_{0\le m\le n} \left(U^{2^m}/\left(h_0(T)h_1(T,U) \cdots h_m(T,U)\right)\right)$
with $h_n$ defined by $h_0:=T$ and $h_{n+1}(T,U):=h_n(T,U)^2-2U^{2^n}
(n \geq 0)$,
(c) $(n+1)$-fold iteration $F^{(n+1)}(0)= F^{(n+1)}(0,T,U)$ of
$F(X)= \allowbreak F(X,T,U) \allowbreak :=X-f(X)/\frac{df}{dX}(X)$ for $f(X)=X^2-T X+U$,
and to find explicit equalities among truncated Hurwitz continued
fraction expansion of relatively quadratic units $\alpha \in
\mathbb{C}$ over imaginary quadratic fields
$\mathbb{Q}\left(\sqrt{-1}\right)$, $\mathbb{Q}\left(\sqrt{-3}\right)$,
rapidly convergent complex series called the Sierpinski series, and the Newton approximation of
$\alpha$ on the complex plane.
We also give an estimate of the error of the Newton
approximation of the unit $\alpha$.
\end{abstract}

\maketitle

\section{Introduction}

W. Sierpinski has
found a curious identity
between quadratic irrationals and ascending continued fractions 
\begin{align}\label{eq:RealQuadraticIrrationalAscendingCF}
\frac{t-\sqrt{t^2-4}}{2}=\frac{1+\frac{1+\frac{1+ ^{^{.\cdot{}^{\cdot} }}}{g_2(t)}}{g_1(t)}}{g_0(t)} \left(=\sum_{m\geq
0} \frac{1}{g_0(t)g_1(t)\cdots g_m(t)} \right),
\quad                         t=3,4,5,\ldots ,
\end{align}
where  $g_m(t)$ are polynomials defined inductively by 
\begin{align}\label{eq:Gpolynomials}
g_0(t):=t, \quad  g_{m+1}(t)= P\left( g_m(t)\right) ~(m\ge 0)
\end{align}
with $P(X):=X^2-2$, which generalize the equality \eqref{eq:RealQuadraticIrrationalAscendingCF} with $t=3$:
\[
\frac{3-\sqrt{5}}{2}=\frac{1}{3 }+\frac{1}{3\times 7}+\frac{1}{3\times 7\times 47}+\frac{1}{3\times 7\times 47\times
2207}+\cdots 
\]
found by  E. Lucas, cf.  \cite[p. 331]{Lucas}, \cite[pp. 279-281]{Sierpinski}.  We have given an explicit
equality between 
the truncated ascending continued fraction of \eqref{eq:RealQuadraticIrrationalAscendingCF} and a finite symmetric
simple continued fraction
\begin{align}\label{eq:SymmetricCF}
\begin{split}
& \sum_{0\le m\le n} \frac{1}{g_0(t)g_1(t)\cdots g_m(t)}\\
& =[0;t-1,1,t-2,\ldots,   1,t-2,1,t-1] \quad (t\ge 3,n\ge 1), 
\end{split}
\end{align}
where the block ``$1,t-2$'' is repeated $2^{n+1}-3$ times, giving a total
length of $2^{n+2}-3$,
cf. \cite{Tamura1}. Moreover, under a certain modification of the recurrence \eqref{eq:Gpolynomials} with
$\deg_X P(X)\ge 3$, such an explicit equality can be extended to equalities
between
truncated ascending continued fractions and symmetric continued fractions such
that the limit of the fractions on  both sides tends to
a transcendental number, 
cf. \cite{Tamura2}.

The objective of this paper is to extend the equality \eqref{eq:RealQuadraticIrrationalAscendingCF}
to complex numbers,
to disclose the link between the Newton approximations and continued
 fractions. In
what follows, we mean by $S(t,u)$ a series (an ascending continued fraction) defined by 
\begin{align}\label{eq:SierpinskiSeries}
\begin{split}
 & S(t,u):=\sum_{m\geq 0} \frac{u^{2^m}}{h_0(t)h_1(t,u)\cdots h_m(t,u)} \left(=\frac{u^{2^0}+\frac{u^{2^1}+\frac{u^{2^2}+ ^{^{.\cdot{}^{\cdot} }}}{h_2(t)}}{h_1(t)}}{h_0(t)} \right),\\
&  h_0:=t, \quad  h_{n+1}:=h_n^2-2u^{2^n} (n\geq 0),
\end{split}
\end{align}
which will be referred to as a \textit{Sierpinski}\textit{ }\textit{series},
and by $S_n(t,u)$ its truncation
\begin{align}\label{eq:TruncatedSierpinskiSeries}
S_n(t,u):=\sum_{0\le m\le n} \frac{u^{2^m}}{h_0(t)h_1(t,u)\cdots h_m(t,u)}.
\end{align}
Let $\alpha = \alpha (t,u)$ ($|\alpha|<1$) be a relatively quadratic unit such that 
\begin{align}\label{eq:MinimalPolnomialOverGausianField}
\begin{split}
& f(X)=f(X;t,u):=X^2-t X+u \in R_G[X],\\
& u\in \left\{\pm 1,\pm \sqrt{-1}\right\}, \quad t\in R_G\backslash G_2(u)\subset R_G:=\mathbb{Z}\left[\sqrt{-1}\right],
\end{split}
\end{align}
as its  minimal polynomial over the Gaussian field $K_G:=\mathbb{Q}\left(\sqrt{-1}\right)$, where
$G_2(u) 
\left(u\in \left\{\pm 1,\pm \sqrt{-1}\right\}\right)$ are certain finite subsets of $R_G$, cf.
\lemref{Lem:OpenDiscsIncludingRoots}.  We shall see
that for all $t\in R_G\backslash G_2(u)$, $u\in \left\{\pm 1,\pm \sqrt{-1}\right\}$,
\begin{align}\label{eq:ComplexQuadraticIrrationalAscendingCF}
\alpha =\sum_{m\geq 0} \frac{u^{2^m}}{h_0(t)h_1(t,u)\cdots h_m(t,u)}
\end{align}
holds. The equality \eqref{eq:ComplexQuadraticIrrationalAscendingCF} is an extension of \eqref{eq:RealQuadraticIrrationalAscendingCF} to complex numbers.
In some cases,
the Newton approximation $F^{(n)}(c)$ works even for a complex root $\alpha$ of
an analytic function $f(X)$ with a complex number $c$,
where $F^{(n)}(X)$ is the $n$-fold iteration
of $F(X)
=F(X,f)$ defined by
\[
                       F(X)=F(X;f):=X-f(X)\left/\frac{d f}{d X}(X)\right.,
\]
which will be called as the \textit{Newton iterator} of
$f(X)$.  We show for example,
the equalities among three quantities\\
            (N)   $F^{(n+1)}(0)$ with $F(X)=$ the Newton iterator of $f=X^2-t X+u\in R_G[X]$,\\
            (S)   $S_n(t,u)$,\\
            (H)  ($HCF_G$) $\left[a_0;a_1,a_2,\ldots ,a_{2^{n+1}-1}\right]$, $a_n= a_n(t,u)$\\
for $t\in R_G\backslash G_2(u)$, $u\in \left\{\pm 1,\pm \sqrt{-1}\right\}$
(here, (N), (S), and (H) come from the initials  of Newton, Sierpinski,
and  Hurwitz),
where
\begin{align*}
& (HCF_G) \left[a_0;a_1,a_2,\ldots ,a_n,\ldots \right],\\
& a_0\in R_G, \quad a_n\in R_G\backslash \left\{0,\pm 1,\pm \sqrt{-1}\right\} ~  (n\ge 1)
\end{align*}
is a \textit{Hurwitz continued fraction expansion}
of the root  $\alpha = t\left(1-\sqrt{1-4u/t^2}\right)/2$ of $f(X)$,
cf. \defnref{Defn:PrincipalValueOfSquareRoot} and \thmref{Thm:IdentitiesAsComplexNumbers}.

In Section \ref{HurwitzContinuedFractionExpansion}, we give some lemmas for continued fractions, mainly
concerning the 
Hurwitz continued fractions.  We give identities of rational functions
among the 
Newton iterators, truncated Sierpinski series, and continued fractions in Sections
\ref{NewtonIteratorSierpinskiSeries} and
\ref{NewtonIteratorContinuedFractions}, see Lemmas \ref{Lem:NewtonSierpinski2} and \ref{Lem:NewtonHurwitz}.
We give an estimate of the error of the Newton approximation to the
relatively quadratic units $\alpha$, $\beta$ with $f(X)$ as their
minimal polynomial, for the detail, see Section
\ref{SpeedConvergence}.
Summing up lemmas in Sections \ref{HurwitzContinuedFractionExpansion}--\ref{SpeedConvergence}, we state two theorems concerning identities
among truncations of descending/ascending continued fractions and the
Newton approximation, in Section \ref{MainResults}.
We can do the same for relatively quadratic units over the
Eisenstein 
field $K_E:=\mathbb{Q}\left(\sqrt{-3}\right)$ with
\begin{align*}
& f(X)=f(X;t,u):=X^2-t X+u \in R_E[X],\\
& u \in \left\{z\in \mathbb{C}; z^6=1\right\}, \quad t\in R_E\backslash E_2(u)\subset R_E:=\mathbb{Z}\left[\frac{1+\sqrt{-3}}{2}\right],
\end{align*}
as their minimal polynomials, cf. \thmref{Thm:IdentitiesAsComplexNumbersEisenstein}
in Section \ref{EisensteinField}.
We give
\thmref{Thm:IdentitiesAsComplexNumbersEisenstein} without
a proof, since it is quite parallel to that of
\thmref{Thm:IdentitiesAsComplexNumbers}. We remark 
that Theorems \ref{Thm:IdentitiesAsComplexNumbers}, \ref{Thm:IdentitiesAsComplexNumbersEisenstein} are obtained by identities among rational functions (N), (S),
(H) with two independent variables $T$, $U$ in place of $t$, $u$, cf. \thmref{Thm:IdentitiesAsRationalFunctions}.
Thus, the Hurwitz continued fraction expansions are outstanding algorithm 
since they have simpler\footnote{
The simple continued fraction \eqref{eq:SymmetricCF} is equal to the Hurwitz continued
fraction (H) with $u=1$, 
and the length $2^{n+2}-3$ of the former which is almost twice of
the length $2^{n+1}-1$ of the continued 
fraction (H) so that the continued fraction (H) converges almost two times
faster than the simple
continued fraction \eqref{eq:SymmetricCF}.}
compatibility with such as very rapidly
convergent Newton 
approximations and the Sierpinski series as far as concerning to relatively
quadratic 
units over the fields $K_G$ and $K_E$.

\section{Hurwitz Continued fraction expansion}\label{HurwitzContinuedFractionExpansion}

\begin{lem}\label{Lem:AbsoluteValuesOfRoots}
Let $G_1(u)$ ($u \in \left\{\pm 1,\pm \sqrt{-1}\right\}$) be a subset
of $R_G$
defined by
\begin{align*}
& G_1(1):=\left\{0,\pm 1,\pm 2\right\}, \quad G_1(-1):=\left\{0,\pm \sqrt{-1},\pm 2\sqrt{-1}\right\},\\
& G_1\left(\sqrt{-1}\right):=\left\{0,\pm \left(1+\sqrt{-1}\right)\right\}, \quad G_1\left(-\sqrt{-1}\right):=\left\{0,\pm
\left(1-\sqrt{-1}\right)\right\}.
\end{align*}
Let $f(X)=f(X;t,u):=X^2-t X+u\in R_G[X]$  be a  polynomial with  $t, u$
satisfying 
\[
u \in \left\{\pm 1,\pm \sqrt{-1}\right\}, \quad t \in R_G\backslash G_1(u).
\]
Then $f$ has distinct roots $\alpha$, $\beta$ with $|\alpha|<1<|\beta|$.
\end{lem}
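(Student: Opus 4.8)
The plan is to read off everything from Vieta's relations. Writing $\alpha,\beta$ for the two roots of $f$, we have $\alpha+\beta=t$ and $\alpha\beta=u$, so that $|\alpha|\,|\beta|=|u|=1$. My central observation is that it suffices to rule out any root lying on the unit circle. Indeed, if neither root has modulus $1$, then since their moduli multiply to $1$ exactly one of them is $<1$ and the other is $>1$; after relabeling this gives $|\alpha|<1<|\beta|$ and, in particular, $\alpha\neq\beta$. Moreover a repeated root would satisfy $\alpha^2=u$, hence $|\alpha|^2=|u|=1$, so it would also lie on the unit circle. Thus excluding unit-circle roots delivers both distinctness and the modulus inequality at once, and the whole lemma reduces to a single statement about roots on $|z|=1$.

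I would therefore argue by contraposition: assume some root $\alpha$ satisfies $|\alpha|=1$ and show that $t$ is then forced into $G_1(u)$. Since $|\alpha|=1$ gives $1/\alpha=\bar\alpha$, the companion root is $\beta=u/\alpha=u\bar\alpha$, also of modulus $1$, and hence
\[
t=\alpha+\beta=\alpha+u\bar\alpha .
\]
It remains to determine, for each admissible $u$, precisely which Gaussian integers can arise on the right-hand side, using the constraint $t\in R_G$.

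I would then dispatch the four values of $u$. For $u=1$ one gets $t=\alpha+\bar\alpha=2\Re(\alpha)$, a real number in $[-2,2]$, so the Gaussian-integer constraint forces $t\in\{0,\pm1,\pm2\}=G_1(1)$; for $u=-1$ one gets $t=\alpha-\bar\alpha=2\sqrt{-1}\,\Im(\alpha)$, purely imaginary in $[-2\sqrt{-1},2\sqrt{-1}]$, giving $t\in\{0,\pm\sqrt{-1},\pm2\sqrt{-1}\}=G_1(-1)$. For $u=\pm\sqrt{-1}$, writing $\alpha=x+\sqrt{-1}\,y$ with $x^2+y^2=1$, a direct computation collapses $\alpha+u\bar\alpha$ onto the diagonal line through the origin, yielding $t=(x\pm y)(1\pm\sqrt{-1})$ with correlated signs; since $x\pm y\in[-\sqrt2,\sqrt2]$ and $t\in R_G$ forces $x\pm y\in\mathbb{Z}$, we obtain $x\pm y\in\{-1,0,1\}$ and hence $t\in G_1(\pm\sqrt{-1})$. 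In every case $t\in G_1(u)$, which is exactly the contrapositive of the assertion, so for $t\in R_G\backslash G_1(u)$ no root lies on the unit circle and the conclusion follows from the first paragraph. The only step that is not purely mechanical is the $u=\pm\sqrt{-1}$ computation, where one must notice that $\alpha+u\bar\alpha$ lands on $\mathbb{R}(1\pm\sqrt{-1})$ before imposing integrality; the $u=\pm1$ cases are immediate.
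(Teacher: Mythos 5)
Your proof is correct, and the part of it that carries the real content of the lemma is genuinely different from the paper's. The opening reduction is the same in both: since $|\alpha\beta|=|u|=1$, the whole lemma comes down to excluding roots of modulus $1$ (a double root automatically has modulus $1$). From there the paper argues that a unit-modulus root forces $|t|=|\alpha+\beta|\le 2$, so only $t$ in the square $-2\le\Re t,\Im t\le 2$ remain, and it disposes of those by an unspecified ``direct calculation'' over the $4\times 5^2=100$ pairs $(u,t)$. You instead give a uniform, exact characterization: if $|\alpha|=1$ then the companion root is $u\bar\alpha$, so $t=\alpha+u\bar\alpha$, and for each of the four units $u$ this trace is confined to a line through the origin ($\mathbb{R}$, $\sqrt{-1}\,\mathbb{R}$, or the diagonals $\mathbb{R}\left(1\pm\sqrt{-1}\right)$) and to a bounded segment there, so the requirement $t\in R_G$ pins $t$ down to exactly $G_1(u)$; your computations for all four cases check out, including the half-integer values of $\Re\alpha$ or $\Im\alpha$ that produce $t=\pm 1$ and $t=\pm\sqrt{-1}$. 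What your route buys is a hand-verifiable proof with no finite case-check, plus an explanation of where the sets $G_1(u)$ come from (they are precisely the Gaussian-integer values of $\alpha+u\bar\alpha$ on the unit circle). What the paper's computation buys instead is the converse inclusion — its ``if and only if'' shows that for every $t\in G_1(u)$ the conclusion genuinely fails, i.e.\ $G_1(u)$ cannot be shrunk — which your contrapositive does not address; but the lemma as stated does not require that direction.
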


\begin{proof}
Let  $\alpha$, $\beta$ be the roots of $f$. Suppose
that the absolute value of one of the 
roots is one.  Then $|\alpha|=|\beta|=1$ follows from $|\alpha   \beta| =|u|=1$, so
that
$|t|=|\alpha +\beta |\le |\alpha |+|\beta |=2$.  Hence, $|t| >2$ implies that $f$ has distinct
roots
$\alpha, \beta$ with $|\alpha |<1< |\beta  |$.  On the other hand, by direct calculation
one can check
that for  all  $4\times 5^2=100$  pairs of $(u,t)\in R_G^2$ satisfying
\[
u\in \left\{\pm 1,\pm \sqrt{-1}\right\}, \quad -2\le \Re t\le 2, \quad -2\le \Im t\le 2, 
\]
$|\alpha |<1< |\beta |$ does not hold if and only if
\[
u \in \left\{\pm 1,\pm \sqrt{-1}\right\}, \quad t\in G_1(u).
\]
\end{proof}

\begin{defn}\label{Defn:PrincipalValueOfSquareRoot}
We define 
                 $\sqrt{z}:=\sqrt{r}e^{\sqrt{-1}\theta /2 }$ for a complex number $z=re^{\sqrt{-1}\theta}$ ($r>0$, $-\pi <\theta \le \pi$).     
\end{defn}

By \defnref{Defn:PrincipalValueOfSquareRoot}, $-\pi /2<\arg  \sqrt{z}\le \pi /2$ always holds for any $0\neq z\in \mathbb{C}$.
Thus,
the roots $\alpha$, $\beta$ of $f(X)$  explained in \lemref{Lem:AbsoluteValuesOfRoots} are written as
\[
\alpha =\frac{t}{2}\left(1-\sqrt{1-4u/t^2}\right), \quad \beta =\frac{t}{2}\left(1+\sqrt{1-4u/t^2}\right).
\]

We mean by $[x]$ the usual floor function of a real number
$x$. Let
$z=x+\sqrt{-1}y\in \mathbb{C}$ ($x,y \in \mathbb{R}$) be any complex number. An ``integer part''
 $[z]_G$  of  
$z$ is defined by
\[
  [z]_G:=\left[x+\frac{1}{2}\right]+\left[y+\frac{1}{2}\right]\sqrt{-1}\in R_G
\]
which is the lattice point of $R_G$ nearest from $z$ on the complex
plane. We put
\[\langle z \rangle_G
:=z-[z]_G\in \left\{x+\sqrt{-1}y \left| -\frac{1}{2}\leq x<\frac{1}{2}, -\frac{1}{2}\leq y<\frac{1}{2}\right.\right\}
\]
which is a ``fractional part'' of $z$. A.~Hurwitz introduced two continued
fraction expansions for any complex number  $z\in \mathbb{C}$.  By one of
the  expansions
$z$ can be written as
\begin{align*}
& (HCF_G) \left[a_0;a_1,a_2,\ldots ,a_n\right] \quad \textrm{or} \quad (HCF_G) \left[a_0;a_1,a_2,\ldots
\right],\\
& a_0\in R_G, \quad  a_n\in R_G\backslash \left\{0,\pm 1,\pm \sqrt{-1}\right\} ~ (n\ge 1).
\end{align*}
Suppose for simplicity that $z\in \mathbb{C} \backslash K_G$
for a while. Setting
                  $z_0:=z=a_0+\zeta_0$  with $a_0:=\left[z_0\right]_G$, $\zeta_0:=\langle z_0 \rangle_G$,
we have $\zeta_0\in \mathbb{C} \backslash K_G$, so that $\zeta_0\neq 0$. Setting  $z_1:=1/\zeta_0$, we can write
\[
z_0=a_0+\frac{1}{z_1}=a_0+\frac{1}{a_1+\zeta _1}
\]
with $a_1:=\left[z_1\right]_G$, $\zeta _1:=\langle z_1 \rangle_G \in \mathbb{C} \backslash K_G$, $\zeta_1 \neq 0$. Hence setting
$z_2:=1/\zeta_1$, we can write
\[
z_0=a_0+\frac{1}{a_1+\displaystyle\frac{1}{z_2}}
\]
Repeating the procedure, we have an infinite continued fraction for $z \in
\mathbb{C} \backslash K_G$
written as
\[
HCF_G \left[a_0; a_1,a_2,\ldots \right]:=a_0+\frac{1}{ a_1+\displaystyle\frac{1}{a_2+_{ \ddots}} } 
\]
It is easy to see  $a_n \in R_G\backslash \left\{0,\pm 1,\pm \sqrt{-1}\right\}$ ($n\ge 1$). Such
a continued fraction
expansion will be referred to as a \textit{Gauss}\textit{-}\textit{Hurwitz
continued fraction}
(abbr. $HCF_G$) \textit{expansion}. In particular for a real number, the
$HCF_G$ expansion 
turns out to be the so called \textit{nearest}\textit{ }\textit{integer}\textit{
}\textit{continued}\textit{ }\textit{fraction}\textit{ }\textit{expansion}.

We use later the following fundamental properties (H1), (H2), (H3)
of $HCF_G$, cf. \cite{Hurwitz}:   

\begin{enumerate}
\item[(H1)]   The expansion terminates for $z \in K_G$, i.e.,
$z \allowbreak = \allowbreak \left(HCF_G\right) \allowbreak  \left[a_0;a_1,a_2,\ldots ,a_n\right]$ holds for an integer $n \geq 0$,
\item[(H2)]  The expansion converges to $z$ for $z \in \mathbb{C} \backslash K_G$, so that we
can write 
$z= \left(HCF_G\right)\left[a_0; a_1,a_2,\ldots \right]:=\lim_{n\to \infty}  \left(HCF_G\right)\left[a_0;a_1,a_2,\ldots
,a_n\right]$,
\item[(H3)]  The $HCF_G$ expansion of any relatively quadratic algebraic element
over $K_G$
becomes eventually periodic, and vice versa.   
\end{enumerate}

If one replace the definition of  $[z]_G$  ($z\in \mathbb{C}$) by $[z]_E$
as the element 
of the lattice
\[
R_E=\left\{\left. x \times \frac{1+\sqrt{-3}}{2}+y \times \frac{1-\sqrt{-3}}{2} \right|
x,y\in \mathbb{Z}\right\}
\]
nearest from $z$ on the complex plane in the argument above, one can
define a continued 
fraction expansion with partial denominators in $R_E$, to be precise see
\cite{Hurwitz}. Such a 
continued fraction will be written as $\left(HCF_E\right)\left[a_0; a_1,a_2,\ldots \right]$
referred to as an 
\textit{Eisenstein}\textit{-}\textit{Hurwitz}\textit{ }\textit{continued}\textit{ }\textit{fraction} (abbr.,
$HCF_E$).  The $HCF_E$ expansion is also an
extension of the nearest integer continued fraction expansion of real
numbers, and 
$HCF_E$ also has the properties (H1), (H2), (H3).

We denote by $B(c,r)$ an open disc
\[
B(c,r):=\{\left. z \in \mathbb{C} ~\right|~ |z-c| <r\}, \quad c \in \mathbb{C}, \quad r>0.  
\]

\begin{lem}\label{Lem:OpenDiscsIncludingRoots}
Let $D:=\left\{x+\sqrt{-1}y \in R_G ; -1\leq x\leq
1, -1\leq y\leq 1\right\}$, and let  
$G_2 (u)$ ($u \in \left\{\pm 1,\pm \sqrt{-1}\right\}$) be sets given by
$G_2(1):=D \cup \{\pm 2\}$, $G_2\left(\sqrt{-1}\right):=D \cup \left\{\pm \left(1+2\sqrt{-1}\right),\pm \left(2+\sqrt{-1}\right)\right\}$,
$G_2(-1):=D\cup \left\{\pm 2\sqrt{-1}\right\}$, $G_2\left(-\sqrt{-1}\right):=D \cup \left\{\pm \left(1-2\sqrt{-1}\right),\pm \left(2-\sqrt{-1}\right)\right\}$,
see Figure~\ref{fg:G2}.
Let $\alpha$, $\beta$ be the roots of
\[
f=X^2-tX+u, \quad u \in \left\{\pm 1,\pm \sqrt{-1}\right\}, \quad t \in R_G\backslash G_2(u)
\]
as in \lemref{Lem:AbsoluteValuesOfRoots}. Then
\[
\alpha \in B(0,1/2), \quad \beta \in B(t,1/2)
\]
hold.
In particular,  $[\alpha (t,u)]_G=0$, $[\beta (t,u)]_G=t$.      
\end{lem}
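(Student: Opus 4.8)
The plan is to reduce both containments to the single inequality $|\alpha|<1/2$, then to isolate a modulus threshold beyond which this holds automatically, and finally to dispose of the finitely many remaining $t$ by direct computation, exactly as in the proof of \lemref{Lem:AbsoluteValuesOfRoots}.

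First I would exploit the elementary symmetric relations. Since $\alpha,\beta$ are the roots of $f=X^2-tX+u$, we have $\alpha+\beta=t$ and $\alpha\beta=u$, whence $\beta-t=-\alpha$ and $|\alpha|\,|\beta|=|u|=1$. Consequently the two assertions $\alpha\in B(0,1/2)$ and $\beta\in B(t,1/2)$ are literally the same statement $|\alpha|<1/2$, which is in turn equivalent to $|\beta|>2$. Once $|\alpha|<1/2$ is known, the claims about the integer parts follow at once: a point of $B(0,1/2)$ has real and imaginary parts in $(-1/2,1/2)$, so $[\alpha]_G=0$; and since $t\in R_G$ and $[\,\cdot\,]_G$ is equivariant under translation by $R_G$, the containment $\beta\in B(t,1/2)$ gives $[\beta]_G=t+[\beta-t]_G=t$.

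Next I would pin down the threshold. Assume $t\notin G_1(u)$, so that \lemref{Lem:AbsoluteValuesOfRoots} gives $|\alpha|<1<|\beta|$. If, for contradiction, $|\alpha|\ge 1/2$, then $|\beta|=1/|\alpha|\le 2$, so $1<|\beta|\le 2$, and therefore
\[
|t|=|\alpha+\beta|\le|\alpha|+|\beta|=|\beta|+\frac{1}{|\beta|}\le \frac{5}{2},
\]
using that $r\mapsto r+r^{-1}$ is increasing for $r>1$ and so is maximised on $(1,2]$ at $r=2$. Taking the contrapositive, and noting that $|t|>5/2$ already rules out $t\in G_1(u)$, we conclude that $|t|>5/2$ forces $|\alpha|<1/2$. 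Since every point of $G_2(u)$ satisfies $|t|\le\sqrt{5}<5/2$ (the farthest being the $|t|=\sqrt{5}$ elements occurring for $u=\pm\sqrt{-1}$), every $t$ with $|t|>5/2$ already lies in $R_G\setminus G_2(u)$ and meets the conclusion.

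It then remains to treat the finitely many $t\in R_G\setminus G_2(u)$ with $|t|\le 5/2$, i.e. with $|t|^2\le 6$; these have norm in $\{0,1,2,4,5\}$, a list of at most $21$ lattice points. For these, and for each $u\in\{\pm1,\pm\sqrt{-1}\}$, I would compute $\alpha=\frac{t}{2}\bigl(1-\sqrt{1-4u/t^2}\bigr)$ directly and check $|\alpha|<1/2$, thereby confirming that $|\alpha|\ge 1/2$ holds precisely for $t\in G_2(u)$ (so that $G_2(u)$ is in fact the sharp exceptional set). I expect the only delicate part to be the borderline lattice points with $|t|=\sqrt{5}$ when $u=\pm\sqrt{-1}$: there the region $\{|\alpha|\le 1/2\}$ is tilted relative to the axes, so that the four points $\pm(1+2\sqrt{-1}),\pm(2+\sqrt{-1})$ fall inside while nearby points of the same modulus fall outside, and each must be verified by hand — exactly the kind of finite case analysis already carried out for the $100$ pairs in \lemref{Lem:AbsoluteValuesOfRoots}.
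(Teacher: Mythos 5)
Your proposal is correct, and it reaches the finite case-check by a genuinely different estimate than the paper's. Both arguments share the same skeleton: reduce everything to $|\alpha|<1/2$ (via $\beta=t-\alpha$ and $|\alpha\beta|=|u|=1$), dispose of all but finitely many $t$ by an inequality, and finish the remaining lattice points by ``direct calculation,'' which is also how the paper treats its exceptional cases. The difference is in the inequality. The paper splits by the grid condition: for the $100$ pairs with $|\Re t|\le 2$, $|\Im t|\le 2$ it verifies directly that $|\alpha|<1/2$ fails exactly for $t\in G_2(u)$, while for $|\Re t|\ge 3$ or $|\Im t|\ge 3$ it writes $|\alpha|=2\left/\left(|t|\left|1+\sqrt{1-4u/t^2}\right|\right)\right.$ and controls the principal square root of \defnref{Defn:PrincipalValueOfSquareRoot} (from $\left|4u/t^2\right|\le 4/9$ one gets $\left|\arg\sqrt{1-4u/t^2}\right|\le(\arcsin 4/9)/2$), concluding $|\alpha|\le 2/\left(3+\sqrt{5}\right)<1/2$. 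You instead obtain a modulus threshold from Vieta alone: if $t\notin G_1(u)$ and $|\alpha|\ge 1/2$, then \lemref{Lem:AbsoluteValuesOfRoots} gives $|\beta|=1/|\alpha|\in(1,2]$, so $|t|\le|\beta|+|\beta|^{-1}\le 5/2$; hence $|t|>5/2$ forces $|\alpha|<1/2$. Your route is more elementary — it avoids all bookkeeping with the principal branch, which is the only delicate point in the paper's estimate — and it reuses \lemref{Lem:AbsoluteValuesOfRoots} rather than re-deriving root estimates; it also leaves a slightly smaller finite check ($21$ Gaussian integers of norm $\le 6$ per $u$, versus $25$ grid points per $u$). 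What the paper's split buys is that its finite verification runs over exactly the grid on which $G_2(u)$ is defined, so the characterization ``$|\alpha|<1/2$ fails iff $t\in G_2(u)$'' is read off directly; in your version one must additionally note, as you do, that $G_2(u)\subset\left\{|t|\le\sqrt{5}\right\}$, so that no point of $G_2(u)$ escapes the finite region. Either way the irreducible content is the same finite computation, in particular the borderline norm-$5$ points for $u=\pm\sqrt{-1}$.
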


\begin{figure}
\begin{center}
\includegraphics[width=0.4\textwidth,clip]{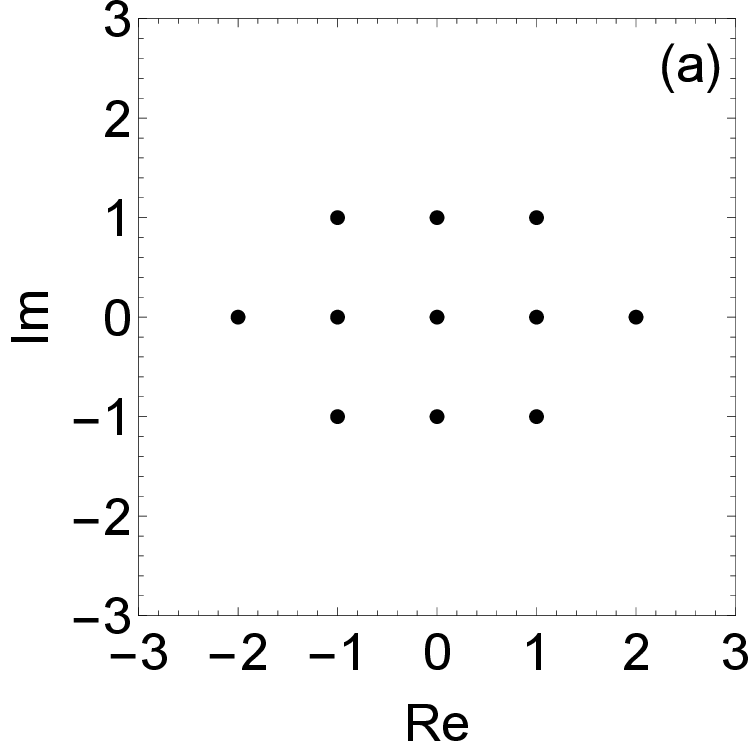}
\hspace*{0.05\textwidth}\includegraphics[width=0.4\textwidth,clip]{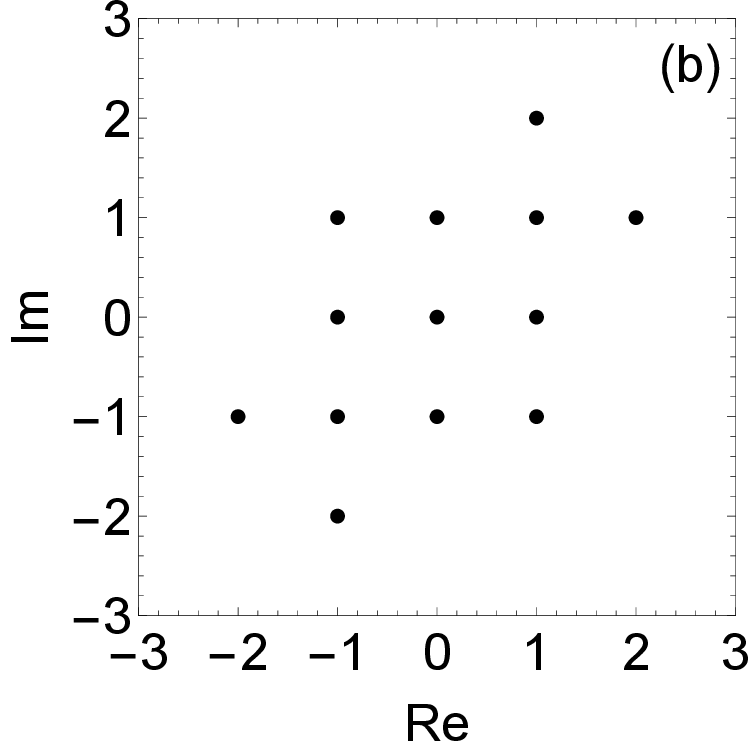}\\
\vspace*{0.025\textwidth}
\includegraphics[width=0.4\textwidth,clip]{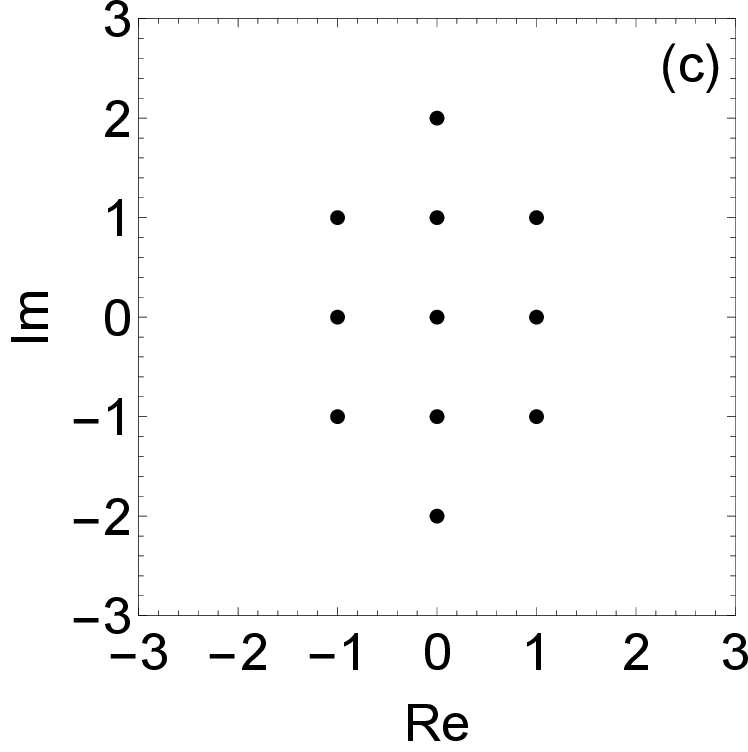}
\hspace*{0.05\textwidth}\includegraphics[width=0.4\textwidth,clip]{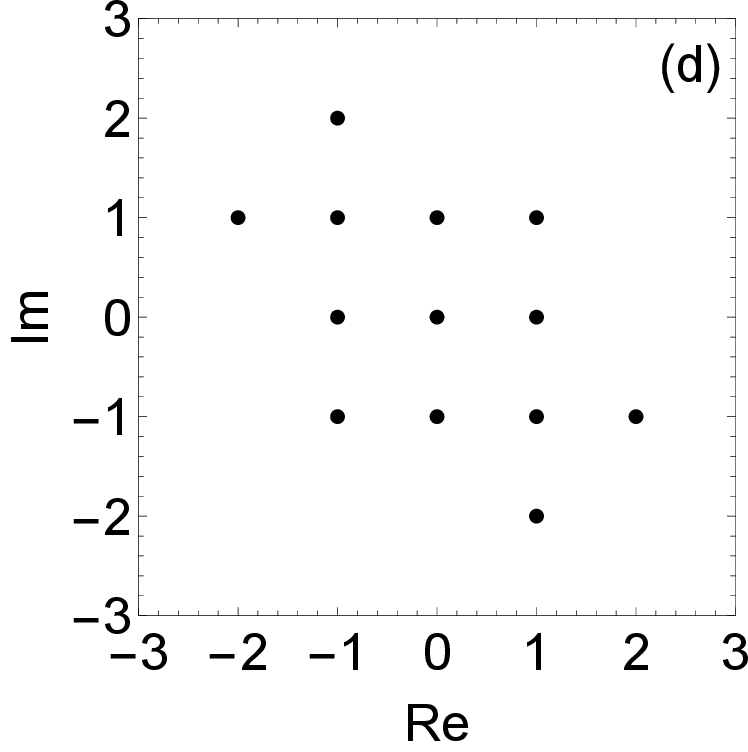}
\end{center}
\caption{\label{fg:G2}
(a) $G_2(1)$,
(b) $G_2\left(\sqrt{-1}\right)$,
(c) $G_2(-1)$, and
(d) $G_2\left(-\sqrt{-1}\right)$.}
\end{figure}

\begin{rem}
Since $G_2(u) \supset G_1(u)$  ($\forall u \in \left\{\pm 1, \pm \sqrt{-1}\right\}$),
$f$ has two roots
 $\alpha$, $\beta$  with  $|\alpha |<1< |\beta |$ by \lemref{Lem:AbsoluteValuesOfRoots}.      
\end{rem}

\begin{proof}[Proof of \lemref{Lem:OpenDiscsIncludingRoots}]
Let $\alpha$, $\beta$ be the roots of
$f$  for $t \in R_G$,
$u \in \left\{\pm 1,\pm \sqrt{-1}\right\}$ in general with $|\alpha |\le |\beta |$.  By  direct
calculation,
one can check that for  $4 \times 5^2=100$  pairs of $(u,t)\in R_G^2$ satisfying
\[
u \in \left\{\pm 1,\pm \sqrt{-1}\right\}, \quad -2\le \Re t\le 2, \quad -2\le \Im t\le 2,
\]
$|\alpha |<1/2$ does not hold if and only if
\[
u \in \left\{\pm 1,\pm \sqrt{-1}\right\}, \quad t \in G_2(u).
\]
Now
suppose that
\[
u \in \left\{\pm 1, \pm \sqrt{-1}\right\}, \quad t=x+y\sqrt{-1} \textrm{~with~} |x| \ge 3 \textrm{~or~} |y| \ge 3.
\]
Then $\left|4u/t^2\right|=\left|4/t^2\right|\le 4/9$, so that $\left|\arg \left(1-4u/t^2\right)\right| \le  \arcsin 4/9$. Recalling
\defnref{Defn:PrincipalValueOfSquareRoot},  we have $\left|\arg  \sqrt{1-4u/t^2} \right|\le (\arcsin  4/9)/2$, and
\begin{align*}
\left|\alpha\right| &=
\left| \frac{t\left(1-\sqrt{1-4u/t^2}\right)}{2} \right|
= \frac{2} {\left|t\right| \left|1+\sqrt{1-4u/t^2}\right|}\\
& \le  \frac{2}{3 \left(1+\sqrt{1-4/9}\right)}
<\frac{1}{2},
\end{align*}
which implies $\alpha \in B(0,1/2)$, and  $\beta =t-\alpha \in B(t,1/2)$.
\end{proof}

\begin{lem}\label{Lem:HCFGExpansions}
Let $\alpha =\alpha (t,u)$, $\beta =\beta (t,u)$ be numbers with
 $t,
u$ as in \lemref{Lem:OpenDiscsIncludingRoots}. Then the $HCF_G$ expansions of $\alpha, \beta$ are given
by
\begin{align*}
\alpha (t,u) &=\left(HCF_G\right)\left[0;u^{-1}t,-t,u^{-1}t,-t,u^{-1}t,\ldots \right],\\
\beta (t,u) &=\left(HCF_G\right)\left[t;-u^{-1}t,t,-u^{-1}t,t,-u^{-1}t,\ldots \right].
\end{align*}
\end{lem}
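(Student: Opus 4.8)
The plan is to run the $HCF_G$ algorithm directly on $\alpha$ and on $\beta$, reading off the partial quotients step by step, and to show that the sequence of complete quotients $z_n$ becomes periodic with period $2$. Throughout I would use the two algebraic relations $\alpha+\beta=t$ and $\alpha\beta=u$ coming from $f(X)=X^2-tX+u=(X-\alpha)(X-\beta)$, together with the localizations $\alpha\in B(0,1/2)$, $\beta\in B(t,1/2)$ and the resulting identities $[\alpha]_G=0$, $[\beta]_G=t$ furnished by \lemref{Lem:OpenDiscsIncludingRoots}. Since $u\in\{\pm1,\pm\sqrt{-1}\}$ has $|u|=1$, one has $u^{-1}\in\{\pm1,\pm\sqrt{-1}\}\subset R_G$ and $1/\alpha=\beta/(\alpha\beta)=u^{-1}\beta$; the key elementary observation is that multiplication by a modulus-one unit $u^{-1}$ carries the disc $B(c,1/2)$ onto $B(u^{-1}c,1/2)$, and that any point of an open disc $B(w,1/2)$ centred at a Gaussian integer $w$ has $w$ as its nearest lattice point, i.e.\ $[\,\cdot\,]_G=w$ there (because $|z-w|<1/2$ forces $|\Re(z-w)|,|\Im(z-w)|<1/2$).

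For $\alpha$ I would compute as follows. With $z_0:=\alpha$ we get $a_0=[\alpha]_G=0$ and $\zeta_0=\alpha$, whence $z_1:=1/\zeta_0=u^{-1}\beta$. As $\beta\in B(t,1/2)$, we have $z_1\in B(u^{-1}t,1/2)$ with $u^{-1}t\in R_G$, so $a_1=u^{-1}t$ and $\zeta_1=u^{-1}\beta-u^{-1}t=u^{-1}(\beta-t)=-u^{-1}\alpha$. Then $z_2:=1/\zeta_1=-u/\alpha=-u\cdot u^{-1}\beta=-\beta\in B(-t,1/2)$, giving $a_2=-t$ and $\zeta_2=-\beta+t=\alpha=\zeta_0$. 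Hence $z_3=z_1$ and, by induction, $z_{2k+1}=z_1$, $z_{2k+2}=z_2$ for all $k\ge0$; the partial quotients repeat as $u^{-1}t,-t,u^{-1}t,-t,\dots$, which is exactly the claimed expansion of $\alpha$. The computation for $\beta$ is entirely parallel: $a_0=[\beta]_G=t$, $\zeta_0=-\alpha$, then $z_1=-u^{-1}\beta$ with $a_1=-u^{-1}t$ and $\zeta_1=u^{-1}\alpha$, then $z_2=u/\alpha=\beta$ with $a_2=t$ and $\zeta_2=-\alpha=\zeta_0$, yielding the period $-u^{-1}t,t,-u^{-1}t,t,\dots$.

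Two points remain. First, admissibility of the partial quotients: I must check $a_n\in R_G\setminus\{0,\pm1,\pm\sqrt{-1}\}$ for $n\ge1$, i.e.\ that none of $u^{-1}t$ or $\pm t$ lies in $\{0,\pm1,\pm\sqrt{-1}\}$. Since multiplication by the unit $u^{-1}$ permutes $\{0,\pm1,\pm\sqrt{-1}\}$, this reduces to $t\notin\{0,\pm1,\pm\sqrt{-1}\}$, which holds because $\{0,\pm1,\pm\sqrt{-1}\}\subset D\subset G_2(u)$ while $t\in R_G\setminus G_2(u)$. Second, convergence: as $\alpha$ is a relatively quadratic unit (equivalently $f$ is irreducible over $K_G$), we have $\alpha,\beta\in\mathbb{C}\setminus K_G$, so the $HCF_G$ expansions obtained above do not terminate and, by property (H2), converge to $\alpha$ and $\beta$ respectively. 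The only genuinely delicate step is the chain of nearest-integer evaluations $[u^{-1}\beta]_G=u^{-1}t$ and $[-\beta]_G=-t$ (and their $\beta$-analogues); everything there is controlled by the disc localizations of \lemref{Lem:OpenDiscsIncludingRoots} together with the modulus-one rotation $u^{-1}$, so once those are invoked the argument closes with no further estimates.
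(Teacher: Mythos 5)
Your proposal is correct and follows essentially the same route as the paper: run the Hurwitz algorithm directly, use the disc localizations $\alpha\in B(0,1/2)$, $\beta\in B(t,1/2)$ from \lemref{Lem:OpenDiscsIncludingRoots} to read off each nearest integer $[z_n]_G$, and observe that the complete quotients become periodic with period two. The one improvement worth noting is that your rotation observation --- that the modulus-one unit $u^{-1}$ carries $B(c,1/2)$ onto $B(u^{-1}c,1/2)$, so $[u^{-1}\beta]_G=u^{-1}t$ immediately --- treats all four values of $u$ uniformly, whereas the paper writes out only the case $u=1$ and dismisses $u=-1,\pm\sqrt{-1}$ as routine.
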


\begin{rem}
(1)   \lemref{Lem:OpenDiscsIncludingRoots} implies the irreducibility of the polynomial
$f(X;t,u)$ over  the Gaussian field $K_G$ with $t \in R_G\backslash G_2(u)$, $u \in \left\{\pm 1,\pm \sqrt{-1}\right\}$.\\
(2)   The convergence of all the continued
fractions in \lemref{Lem:HCFGExpansions}
 follows from (H2).\\
(3)   The length of the period of the continued fractions in \lemref{Lem:HCFGExpansions} turns out to be one  if $u=-1$.
\end{rem}

\begin{proof}[Proof of \lemref{Lem:HCFGExpansions}]
We consider the case where $u=1$.
 Let  $\beta =\beta (1,t)$ and
 $\beta =\left(HCF_G\right)\left[b_0; b_1,b_2,\ldots \right]$ be the continued fraction obtained by
the Hurwitz
algorithm. By \lemref{Lem:OpenDiscsIncludingRoots}, $[\beta ]_G=t$, i.e., $b_0=t$, and
\[
\beta =b_0+(\beta -t)
= b_0+\frac{1}{\frac{-t\left(1+\sqrt{1-4/t^2}\right)}{2}}.
\]
Here, $\left[\frac{-t\left(1+\sqrt{1-4/t^2}\right)}{2}\right]_G = -\left[\frac{t\left(1+\sqrt{1-4/t^2}\right)}{2}\right]_G=-t$,
so that we get $b_1=-t$, and
\begin{align*}        
\beta =b_0 +\frac{1}{b_1+\frac{1}{\beta}}.       
\end{align*}
Hence we get $\beta =\left(HCF_G\right)[t;-t,t,-t,\ldots ]$.  By \lemref{Lem:OpenDiscsIncludingRoots}, $|\alpha |< 1/2$,
so that $[\alpha ]_G=0$, which together with $\alpha =1/\beta$ implies 
$\alpha =\left(HCF_G\right)[0;t,-t,t,-t,\ldots ]$. It is a routine work to find the 
$HCF_G$ expansions of $\alpha, \beta$  for the cases $u=-1, \pm \sqrt{-1}$, and we skip the detail.
\end{proof}

\section{The Newton iterator and the Sierpinski series}\label{NewtonIteratorSierpinskiSeries}

In Sections \ref{NewtonIteratorSierpinskiSeries}, \ref{NewtonIteratorContinuedFractions}, we mean by $T, U$ independent indeterminates, by $P:=\mathbb{Z}[T,U]$
the ring of polynomials of variables $T, U$, and by $C:=\mathbb{Q}(T,U)$
the rational
function field.

\begin{lem}\label{Lem:NewtonSierpinski1}
Let $\left\{h_n\right\}_{n\geq 0}$ be a sequence in the
ring $P$ defined by
\[
h_0=T, \quad h_{n+1}=h_n^2-2U^{2^n} ~ (\forall n\geq 0).      
\]
Let $F(X) \in C(X)$ be the Newton iterator for $f=X^2-TX+U \in C(X)$ in the
formal 
sense, i.e.,
\[
F(X)=F(X;f)= X-f \left/\frac{df}{dX}\right.=\frac{X^2-U}{2X-T}
\]
and let $F^{(n)}(X)\in C(X)$ ($F^{(0)}(X):=X$) be a rational function defined
by the 
$n$-fold iteration of $F(X)$.  Then identities
\begin{align}
F^{(n+1)}(0)-F^{(n)}(0) &=\frac{U^{2^n}}{ h_0 h_1\cdots h_n} \quad (n\geq 0),\label{eq:DiffF0}\\
F^{(n+1)}(T)-F^{(n)}(T) &=-\frac{U^{2^n}}{ h_0 h_1\cdots h_n} \quad (n\geq 0) \label{eq:DiffFT}
\end{align}
hold as elements in the field $C$.
\end{lem}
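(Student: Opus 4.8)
The plan is to exploit the classical fact that the Newton iterator of a quadratic is conjugate, via the Möbius map sending the two roots to $0$ and $\infty$, to the squaring map. I would first change variables to make this transparent: introduce two fresh indeterminates $\alpha,\beta$ and set $T=\alpha+\beta$, $U=\alpha\beta$, so that $f=(X-\alpha)(X-\beta)$ and $C=\mathbb{Q}(T,U)$ is precisely the fixed field of the transposition $\alpha\leftrightarrow\beta$ acting on $\mathbb{Q}(\alpha,\beta)$. Since both sides of \eqref{eq:DiffF0} and \eqref{eq:DiffFT} lie in $C$, it suffices to verify the identities in the larger field $\mathbb{Q}(\alpha,\beta)$, where the roots are available, and then descend.

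The key computation is that clearing denominators in $F(X)=(X^2-U)/(2X-T)$ and using $U=\alpha\beta$, $T=\alpha+\beta$ gives $F(X)-\alpha=(X-\alpha)^2/(2X-T)$ and $F(X)-\beta=(X-\beta)^2/(2X-T)$. Dividing these yields the conjugacy $\phi(F(X))=\phi(X)^2$ for $\phi(X):=(X-\alpha)/(X-\beta)$, and iterating gives $\phi(F^{(n)}(X))=\phi(X)^{2^n}$. Solving $\phi(Y)=\rho$ for $Y$ then produces the closed form
\[
F^{(n)}(c)=\frac{\alpha-\beta\,\phi(c)^{2^n}}{1-\phi(c)^{2^n}},
\]
legitimate because $\phi(0)=\alpha/\beta$ and $\phi(T)=\beta/\alpha$ are not roots of unity in $\mathbb{Q}(\alpha,\beta)$, so the denominator is nonzero and $c$ is not a pole of the iterate.

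From this closed form I would compute the telescoping difference directly: writing $s=\phi(c)^{2^n}$, a short cancellation collapses $F^{(n+1)}(c)-F^{(n)}(c)$ to $s(\beta-\alpha)/(1-s^2)$. Specializing to $c=0$ with $\phi(0)=\alpha/\beta$, and clearing the powers of $\beta$ while using $\alpha\beta=U$, I obtain
\[
F^{(n+1)}(0)-F^{(n)}(0)=\frac{U^{2^n}(\beta-\alpha)}{\beta^{2^{n+1}}-\alpha^{2^{n+1}}},
\]
and taking $c=T$ with $\phi(T)=\beta/\alpha$ the same computation yields $F^{(n+1)}(T)-F^{(n)}(T)=U^{2^n}(\beta-\alpha)/(\alpha^{2^{n+1}}-\beta^{2^{n+1}})$, differing only by the sign of the denominator.

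It then remains to identify the denominator with $h_0h_1\cdots h_n$. I would prove by induction on $k$ that $h_k=\alpha^{2^k}+\beta^{2^k}$: the base case is $h_0=\alpha+\beta=T$, and the inductive step uses $(\alpha^{2^k}+\beta^{2^k})^2=\alpha^{2^{k+1}}+\beta^{2^{k+1}}+2(\alpha\beta)^{2^k}$ together with $h_{k+1}=h_k^2-2U^{2^k}$. Combining this with the standard factorization $\beta^{2^{n+1}}-\alpha^{2^{n+1}}=(\beta-\alpha)\prod_{k=0}^{n}(\alpha^{2^k}+\beta^{2^k})$ rewrites each denominator above as $\pm(\beta-\alpha)h_0h_1\cdots h_n$, whereupon the factor $\beta-\alpha$ cancels and gives \eqref{eq:DiffF0} and \eqref{eq:DiffFT} with their respective signs. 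There is no deep obstacle here; the one point I would take care to justify is the passage to $\mathbb{Q}(\alpha,\beta)$, namely that the iterates are regular at $0$ and $T$ and the displayed denominators are nonzero as rational functions, after which the descent back to $C$ is automatic since every quantity occurring is symmetric in $\alpha,\beta$.
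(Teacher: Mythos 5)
Your proposal is correct, and every computation in it checks out: $F(X)-\alpha=(X-\alpha)^2/(2X-T)$, the conjugacy $\phi\circ F=(\,\cdot\,)^2\circ\phi$, the telescoped difference $s(\beta-\alpha)/(1-s^2)$, the trace identity $h_k=\alpha^{2^k}+\beta^{2^k}$, and the factorization $\beta^{2^{n+1}}-\alpha^{2^{n+1}}=(\beta-\alpha)h_0h_1\cdots h_n$ assemble exactly into \eqref{eq:DiffF0} and \eqref{eq:DiffFT}. But your route is genuinely different from the paper's. The paper never leaves $C=\mathbb{Q}(T,U)$ and never introduces the roots: it proves \eqref{eq:DiffF0} by induction on $n$, reducing the inductive step to an auxiliary identity $\frac{T^2-4U}{F^{(n)}(0)^2-TF^{(n)}(0)+U}+2=\frac{h_{n+1}}{U^{2^n}}$, which is itself proved by a second induction; the companion identity \eqref{eq:DiffFT} is then dispatched with ``we can do the same.'' That argument is elementary and self-contained, but the auxiliary identity appears unmotivated. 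Your approach buys conceptual transparency at the modest cost of the field extension $\mathbb{Q}(T,U)\hookrightarrow\mathbb{Q}(\alpha,\beta)$ (which is legitimate: the elementary symmetric functions are algebraically independent, so the embedding is injective and identities can be verified upstairs): it explains \emph{why} the exponents $2^n$ appear (Newton's map for a quadratic is conjugate to squaring), \emph{what} the $h_k$ are (the traces $\alpha^{2^k}+\beta^{2^k}$), and it treats both identities symmetrically in one computation. Moreover, your closed form $F^{(n)}(c)=\bigl(\alpha-\beta\phi(c)^{2^n}\bigr)/\bigl(1-\phi(c)^{2^n}\bigr)$ is stronger than the lemma itself: specialized to numerical $t,u$ it essentially re-derives the error estimate of Lemma~\ref{Lem:SpeedOfConvergence}, where the comparison sequence $g_n=\rho^{2^n}+\rho^{-2^n}$ is precisely the same trace identity in disguise. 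The one point you rightly flag --- regularity of the iterates at $0$ and $T$ --- is handled cleanly inside your own framework, since $\phi(F^{(k)}(0))=(\alpha/\beta)^{2^k}$ never equals $1$ (so the iterate is finite) nor $-1$ (so it is never the pole $T/2$ of $F$); spelling those two lines out would make the proof complete.
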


\begin{proof}
First we prove \eqref{eq:DiffF0}. It is
clear that \eqref{eq:DiffF0} holds for 
$n=0$. Suppose that \eqref{eq:DiffF0} holds for a nonnegative integer $n$. Then we have
\[
\left(F\left(F^{(n)}(0)\right)-F^{(n)}(0)=\right)-\frac{F^{(n)}(0)^2-T F^{(n)}(0)+U}{2 F^{(n)}(0)-T}=\frac{U^{2^n}}{
    h_0 h_1\cdots h_n},
\]
which implies
\begin{align}\label{eq:RHSDiffF0}
\frac{U^{2^{n+1}}}{h_0 h_1\cdots h_n h_{n+1}}=-\frac{F^{(n)}(0)^2-T F^{(n)}(0)+U}{2 F^{(n)}(0)-T}\frac{
U^{2^n}}{h_{n+1}}.
\end{align}
Since $F^{(n+1)}(0)=\frac{F^{(n)}(0)^2-U}{2F^{(n)}(0)-T}$, we also have
\begin{multline}\label{eq:LHSDiffF0}
- \frac{F^{(n+1)}(0)^2-T F^{(n+1)}(0)+U}{2F^{(n+1)}(0)-T}\\
=-\frac{\left(F^{(n)}(0)^2-T F^{(n)}(0)+U\right)^2}{\left(2 F^{(n)}(0)-T\right) \left(2 F^{(n)}(0)^2-2 T F^{(n)}(0)+T^2-2U\right)}.
\end{multline}
To complete the proof of \eqref{eq:DiffF0}, it suffices to show
\begin{align}\label{eq:2ndInduction}
\frac{T^2-4U}{F^{(n)}(0)^2-T F^{(n)}(0)+U}+2=\frac{h_{n+1}}{U^{2^n}} \quad (n\geq 0),
\end{align}
since \eqref{eq:2ndInduction} implies
\[
\frac{F^{(n)}(0)^2-T F^{(n)}(0)+U}{2 F^{(n)}(0)^2-2 T F^{(n)}(0)+T^2-2U}=
\frac{U^{2^n}}{h_{n+1}},
\]
which together 
with \eqref{eq:LHSDiffF0} and \eqref{eq:RHSDiffF0} implies
\begin{align*}
-\frac{F^{(n+1)}(0)^2-T F^{(n+1)}(0)+U}{2F^{(n+1)}(0)-T} &=
-\frac{F^{(n)}(0)^2-T F^{(n)}(0)+U}{2 F^{(n)}(0)-T} \cdot \frac{ U^{2^n}}{h_{n+1}}\\     &=\frac{U^{2^{n+1}}}{ h_0 h_1\cdots
h_{n+1}}.
\end{align*}
We show \eqref{eq:2ndInduction} by induction. It is clear that \eqref{eq:2ndInduction} holds for $n=0$. Suppose
that 
\eqref{eq:2ndInduction} holds for a nonnegative integer $n$. Then we have
\begin{align*}
& \frac{h_{n+2}}{U^{2^{n+1}}}=\left(\frac{h_{n+1}}{U^{2^n}}\right)^2-2
=\left(\frac{T^2-4U}{F^{(n)}(0)^2-T F^{(n)}(0)+U}+2\right)^2-2\\
& =\frac{\left(T^2-4U\right)\left(2 F^{(n)}(0)-T\right)^2}{\left(F^{(n)}(0)^2-T F^{(n)}(0)+U\right)^2}+2
=\frac{T^2-4U}{F^{(n+1)}(0)^2-T F^{(n+1)}(0)+U}+2
\end{align*}
Thus, we have proved \eqref{eq:DiffF0}.
We can do the same for \eqref{eq:DiffFT}.
\end{proof}

Recalling the definition of truncated Sierpinski series \eqref{eq:TruncatedSierpinskiSeries}, let $S_n(T,U)$ be a rational function defined by
\[
S_n(T,U):= \sum_{0\le m\le n} \frac{U^{2^m}}{h_0(T)h_1(T,U)\cdots h_m(T,U)} \in
C.
\]
Considering equalities
\begin{align*}
F^{(n+1)}(0) &=F^{(0)}(0)+ \sum_{0\le m\le n} \left(F^{(m+1)}(0)-F^{(m)}(0)\right)\in C,\\
F^{(n+1)}(T) &=F^{(0)}(T)+ \sum_{0\le m\le n} \left(F^{(m+1)}(T)-F^{(m)}(T)\right)\in C,
\end{align*}
we get the following lemma by \lemref{Lem:NewtonSierpinski1}.

\begin{lem}\label{Lem:NewtonSierpinski2}
Two identities
\[
F^{(n+1)}(0)=S_n(T,U), \quad F^{(n+1)}(T)=T-S_n(T,U)
\]
hold for all $n\ge 0$ as rational functions in the field $C$.
\end{lem}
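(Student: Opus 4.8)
The plan is to obtain both identities by a pure telescoping argument, since the substantive work has already been carried out in \lemref{Lem:NewtonSierpinski1}. The two displayed decompositions immediately preceding the statement express $F^{(n+1)}(0)$ and $F^{(n+1)}(T)$ as the initial term plus a telescoping sum of consecutive differences, so everything reduces to substituting the known values of those differences and identifying the initial term.

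First I would record that $F^{(0)}(X)=X$ by definition of the iterate, hence $F^{(0)}(0)=0$ and $F^{(0)}(T)=T$. For the first identity, I would start from
\[
F^{(n+1)}(0)=F^{(0)}(0)+\sum_{0\le m\le n}\left(F^{(m+1)}(0)-F^{(m)}(0)\right),
\]
replace each summand by $U^{2^m}/(h_0 h_1\cdots h_m)$ using \eqref{eq:DiffF0}, and use $F^{(0)}(0)=0$ to cancel the leading term. The remaining sum is exactly $S_n(T,U)$ by the definition of the truncated Sierpinski series, giving $F^{(n+1)}(0)=S_n(T,U)$.

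For the second identity I would run the same argument on
\[
F^{(n+1)}(T)=F^{(0)}(T)+\sum_{0\le m\le n}\left(F^{(m+1)}(T)-F^{(m)}(T)\right),
\]
this time inserting the differences from \eqref{eq:DiffFT}, which carry a minus sign, and using $F^{(0)}(T)=T$. The telescoping sum then becomes $-\sum_{0\le m\le n}U^{2^m}/(h_0 h_1\cdots h_m)=-S_n(T,U)$, so that $F^{(n+1)}(T)=T-S_n(T,U)$. All manipulations take place in the field $C=\mathbb{Q}(T,U)$, so the equalities hold as rational functions, and the range $n\ge 0$ is inherited directly from the range in \lemref{Lem:NewtonSierpinski1}.

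There is essentially no hard step here: the only thing to be careful about is the sign discrepancy between \eqref{eq:DiffF0} and \eqref{eq:DiffFT} and the correct identification of the two initial values $F^{(0)}(0)=0$ versus $F^{(0)}(T)=T$, which is precisely what produces the asymmetry ($S_n$ versus $T-S_n$) between the two conclusions. All the genuine content, namely the closed form of the consecutive differences, is supplied by \lemref{Lem:NewtonSierpinski1}, so this lemma is really just its telescoped reformulation.
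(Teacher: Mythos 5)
Your proof is correct and matches the paper's own argument exactly: the paper derives the lemma by the same telescoping decompositions of $F^{(n+1)}(0)$ and $F^{(n+1)}(T)$, substituting the differences \eqref{eq:DiffF0} and \eqref{eq:DiffFT} from \lemref{Lem:NewtonSierpinski1} and using $F^{(0)}(0)=0$, $F^{(0)}(T)=T$. Nothing is missing; your handling of the sign discrepancy is precisely what the paper relies on as well.
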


\section{The Newton iterator and Continued fractions}\label{NewtonIteratorContinuedFractions}

Let  $\left\{p_n\right\}_{n\geq -2}$, $\left\{q_n\right\}_{n\geq -2}$ be sequences of polynomials
defined by the recurrences
\begin{alignat*}{4}
p_{-2} &:=0, \quad p_{-1} &:=1, \quad p_n &:=a_np_{n-1}+p_{n-2} \quad &(n\geq 0),\\
q_{-2} &:=1, \quad q_{-1} &:=0, \quad q_n &:=a_nq_{n-1}+q_{n-2} \quad &(n\geq 0),
\end{alignat*}
where $\left\{a_n\right\}_{n\geq 0}$ is a sequence of indeterminates/variables. Then
identities
\begin{align*}
&\frac{p_n}{ q_n} =\left[a_0;a_1,a_2,\ldots ,a_n\right] \quad (n\geq 0),\\
&p_nq_{n-1}-p_{n-1}q_n =(-1)^{n-1} \quad (n\ge -1),\\
&\frac{q_n}{ q_{n-1}} =\left[a_n;a_{n-1},a_{n-2},\ldots,a_1\right] \quad (n\ge 1)
\end{align*}
hold as rational functions of $n+1$ variables $a_m$ ($0\le m\le n$), which is
valid for any 
choice of $a_0, a_1, a_2, \ldots, a_n \in \mathbb{C}$ as far as $q_n\neq 0$, cf.~\cite[pp. 27-31]{Perron}.
In what follows, such formulae together with a
trivial identity
\[
U\left[a_0;a_1,a_2,\ldots ,a_n\right]=  \left[U a_0;U^{-1}a_1,U a_2,\ldots ,U^{(-1)^n}a_n\right]
\]
will be  used without references.

\begin{lem}\label{Lem:IdentitiesPQRS}
Let  $C$ be the rational function
field of two variables
$T$ and  $U$.  Let  $p_n, q_n, r_n, s_n \in C ~(n\ge -2)$ be
rational 
functions of $T, U$ defined by the recurrences
\begin{alignat*}{3}
p_n &=a_np_{n-1}+p_{n-2} \quad &\left(n\ge 0, ~ p_{-1}:=1, ~ p_{-2}=0\right),\\
q_n &=a_nq_{n-1}+q_{n-2} \quad &\left(n\ge 0, ~ q_{-1}:=0, ~ q_{-2}=1\right);\\
r_n &=b_nr_{n-1}+r_{n-2} \quad &\left(n\ge 0, ~ r_{-1}:=1, ~ r_{-2}=0\right),\\
s_n &=b_ns_{n-1}+s_{n-2} \quad &\left(n\ge 0, ~ s_{-1}:=0, ~ s_{-2}=1\right);
\end{alignat*}
with
\begin{align*}
&a_0=0, \quad a_n=U^{-1}T ~ (n=\mathrm{odd}\ge 1), \quad a_n=-T ~ (n=\mathrm{even}\ge 2),\\
&b_n=-U^{-1}T ~ (n=\mathrm{odd}\ge 1), \quad b_n=T ~ (n=\mathrm{even}\ge 0).    
\end{align*}
Then identities\\
(a)   $p_n=q_{n-1} ~ (n=\mathrm{odd}\ge -1), \quad p_n=-U q_{n-1} ~ (n=\mathrm{even}\ge 0),$\\
(b)   $r_n=s_{n+1} ~ (n=\mathrm{odd}\ge -1), \quad r_n=-U s_{n+1} ~ (n=\mathrm{even}\ge 0),$\\
are valid as rational functions in $C$.
\end{lem}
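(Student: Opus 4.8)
The plan is to prove both (a) and (b) by induction on $n$, exploiting the fact that each of the four sequences satisfies a second-order linear recurrence; accordingly the induction is two-step, and I treat the even and odd indices separately since the partial quotients depend on parity. For part (a) I first record the base cases: $p_{-1}=1=q_{-2}$ gives the odd relation at $n=-1$, $p_0=0=-U q_{-1}$ gives the even relation at $n=0$, and a direct computation $p_1=a_1 p_0+p_{-1}=1=q_0$ gives the odd relation at $n=1$, which I must peel off separately because the general odd-index step invokes $a_{n-1}=-T$, a value that fails at the exceptional entry $a_0=0$. The inductive step then runs for $n\ge 2$. If $n$ is even I substitute $p_n=a_n p_{n-1}+p_{n-2}$ with $a_n=-T$, apply the hypotheses $p_{n-1}=q_{n-2}$ and $p_{n-2}=-U q_{n-3}$, and use $a_{n-1}=U^{-1}T$ to recognize the answer: $p_n=-T q_{n-2}-U q_{n-3}=-U(U^{-1}T q_{n-2}+q_{n-3})=-U q_{n-1}$. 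If $n$ is odd the computation is the mirror image, with $a_n=U^{-1}T$, $p_{n-1}=-U q_{n-2}$, $p_{n-2}=q_{n-3}$, and $a_{n-1}=-T$: the factors of $U$ cancel and I obtain $p_n=-T q_{n-2}+q_{n-3}=q_{n-1}$. All manipulations take place in the field $C=\mathbb{Q}(T,U)$, where $U^{-1}$ is legitimate.

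Part (b) follows the same template, now with the forward shift $r_n\leftrightarrow s_{n+1}$, so on the target side I expand $s_{n+1}=b_{n+1}s_n+s_{n-1}$. The base cases are $r_{-1}=1=s_0$ and $r_0=T=-U s_1$, and the inductive step runs for $n\ge 1$. For odd $n$ I use $b_n=-U^{-1}T$ and $b_{n+1}=T$ together with $r_{n-1}=-U s_n$ and $r_{n-2}=s_{n-1}$ to get $r_n=T s_n+s_{n-1}=s_{n+1}$; for even $n$ I use $b_n=T$ and $b_{n+1}=-U^{-1}T$ together with $r_{n-1}=s_n$ and $r_{n-2}=-U s_{n-1}$ to get $r_n=T s_n-U s_{n-1}=-U(-U^{-1}T s_n+s_{n-1})=-U s_{n+1}$. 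Because the pattern of the $b_n$ contains no exceptional value analogous to $a_0=0$, here only the two base cases $n=-1,0$ are required.

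The one genuine subtlety, and the place where I expect the bookkeeping to demand attention, is the interaction between the parity split and the exceptional value $a_0=0$ in part (a): the generic odd-index manipulation silently assumes $a_{n-1}=-T$, so the index $n=1$ must be separated out as an extra base case rather than folded into the recursion. Once that boundary is handled correctly, everything reduces to one-line rewritings of the defining recurrences, with the factors $U^{\pm1}$ cancelling exactly as prescribed by the parity-dependent normalizations appearing on the right-hand sides of (a) and (b).
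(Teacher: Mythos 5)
Your proof is correct and takes essentially the same approach as the paper's: induction on the defining recurrences with parity bookkeeping, the only (cosmetic) difference being that the paper advances both parities in a single step (from the pair $p_{2k-1}=q_{2k-2}$, $p_{2k}=-Uq_{2k-1}$ to the pair at $2k+1$, $2k+2$), whereas you split into even/odd cases and peel off $n=1$ as an extra base case. Your explicit handling of the exceptional value $a_0=0$ is in fact slightly more careful than the paper, whose inductive step at $k=0$ tacitly relies on $q_{-1}=0$ to excuse the wrong coefficient $a_0\neq -T$.
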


\begin{proof}
(a):   It is clear that (a) holds for $n=-1, 0$.
Suppose that 
$p_{2k-1}=q_{2k-2}$ and $p_{2k}=-U q_{2k-1}$ hold for an integer $k ~ (\ge 0)$. Then
$p_{2k+1}=U^{-1} T p_{2k}+p_{2k-1}=-Tq_{2k-1}+q_{2k-2}=q_{2k}$, so that $p_{2k+2}=-T p_{2k+1}+p_{2k}
=-T q_{2k}-U q_{2k-1}=-U\left(U^{-1}Tq_{2k}+q_{2k-1}\right)=-U q_{2k+1}$. 
 (b): The same as (a).
\end{proof}

\begin{lem}\label{Lem:NewtonHurwitz}
Let  $F^{(n)}(X)\in C(X)$ be as in \lemref{Lem:NewtonSierpinski1}.
Let $a_n, b_n$ be elements of 
$C$ as in \lemref{Lem:IdentitiesPQRS}.
Then the following two identities hold for all $n\ge 0$.
\begin{align}
F^{(n)}(0) &=\left[a_0;a_1,a_2,\ldots ,a_{2^n-1}\right]\in C,\label{eq:NewtonCF0}\\
F^{(n)}(T) &=\left[b_0;b_1,b_2,\ldots ,b_{2^n-1}\right]\in C.\label{eq:NewtonCFT}
\end{align}
\end{lem}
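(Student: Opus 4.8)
The plan is to prove \eqref{eq:NewtonCF0} by evaluating both sides as explicit elements of $C$ and matching them, and then to deduce \eqref{eq:NewtonCFT} from \eqref{eq:NewtonCF0} by a symmetry argument. I first dispose of \eqref{eq:NewtonCFT}. By the definitions in \lemref{Lem:IdentitiesPQRS} we have $a_0=0$, $b_0=T$, and $b_i=-a_i$ for all $i\ge1$, so the elementary identities $[b_0;b_1,\dots,b_m]=b_0+[0;b_1,\dots,b_m]$ and $[0;-c_1,\dots,-c_m]=-[0;c_1,\dots,c_m]$ give $[b_0;b_1,\dots,b_{2^n-1}]=T-[a_0;a_1,\dots,a_{2^n-1}]$. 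On the other hand \lemref{Lem:NewtonSierpinski2} yields $F^{(n)}(0)+F^{(n)}(T)=T$ for every $n\ge0$ (for $n\ge1$ this is $S_{n-1}+(T-S_{n-1})$, and for $n=0$ it is $0+T$). Hence once \eqref{eq:NewtonCF0} is known, \eqref{eq:NewtonCFT} follows at once. The case $n=0$ of \eqref{eq:NewtonCF0} is trivial, $F^{(0)}(0)=0=[a_0]=[0]$, so I assume $n\ge1$.

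For the left-hand side, since $2^n-1$ is odd, \lemref{Lem:IdentitiesPQRS}(a) gives $p_{2^n-1}=q_{2^n-2}$, whence
\[
[a_0;a_1,\dots,a_{2^n-1}]=\frac{p_{2^n-1}}{q_{2^n-1}}=\frac{q_{2^n-2}}{q_{2^n-1}}.
\]
To evaluate these denominators I would collapse the recurrence $q_m=a_mq_{m-1}+q_{m-2}$ over one period: writing $\begin{pmatrix}q_{2j}\\ q_{2j-1}\end{pmatrix}=M\begin{pmatrix}q_{2j-2}\\ q_{2j-3}\end{pmatrix}$ with $M$ the fixed matrix built from $a_{2j-1}=U^{-1}T$, $a_{2j}=-T$, one gets $\begin{pmatrix}q_{2j}\\ q_{2j-1}\end{pmatrix}=M^{j}\begin{pmatrix}1\\ 0\end{pmatrix}$. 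Introducing the Lucas--Chebyshev polynomials $D_k\in P$ defined by $D_0=0$, $D_1=1$, $D_{k+1}=TD_k-UD_{k-1}$, an induction on $j$ (using the recurrences for $q$ and for $D$ together) yields the closed forms
\[
q_{2j-1}=\frac{(-1)^{j+1}D_{2j}}{U^{j}},\qquad q_{2j}=\frac{(-1)^{j}D_{2j+1}}{U^{j}}.
\]
Taking $j=2^{n-1}$ for $q_{2^n-1}$ and $j=2^{n-1}-1$ for $q_{2^n-2}$, the signs and $U$-powers cancel and the ratio becomes $[a_0;a_1,\dots,a_{2^n-1}]=UD_{2^n-1}/D_{2^n}$.

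For the right-hand side I would invoke \lemref{Lem:NewtonSierpinski2}, which gives $F^{(n)}(0)=S_{n-1}(T,U)$, and show $S_{n-1}=UD_{2^n-1}/D_{2^n}$ as well. This rests on two facts about the $D_k$: first $h_0h_1\cdots h_m=D_{2^{m+1}}$ (immediate from $h_m=D_{2^{m+1}}/D_{2^m}$ and $D_1=1$), and second the standard Lucas identity $D_aD_{b+1}-D_{a+1}D_b=U^{b}D_{a-b}$, which with $a=2^{m+1}-1$, $b=2^m-1$ rewrites each summand $U^{2^m}/(h_0\cdots h_m)=U^{2^m}/D_{2^{m+1}}$ as the difference $UD_{2^{m+1}-1}/D_{2^{m+1}}-UD_{2^m-1}/D_{2^m}$; summing over $0\le m\le n-1$ telescopes to $UD_{2^n-1}/D_{2^n}$ since $D_0=0$. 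Comparing the two computations proves \eqref{eq:NewtonCF0}. (Conceptually the common value appears because $F$ is conjugate to squaring, $\frac{F(X)-\alpha}{F(X)-\beta}=\bigl(\frac{X-\alpha}{X-\beta}\bigr)^{2}$ for the roots $\alpha,\beta$ of $f$, so that $F^{(n)}(0)=U(\alpha^{2^n-1}-\beta^{2^n-1})/(\alpha^{2^n}-\beta^{2^n})=UD_{2^n-1}/D_{2^n}$; this gives an alternative route to the right-hand side that avoids \lemref{Lem:NewtonSierpinski2}.)

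The main obstacle I anticipate is the bookkeeping in the closed-form step for the $q_m$: the correct signs are $(-1)^{2^{n-1}+1}$ rather than the naive $(-1)^{n-1}$, and the $U$-powers must be tracked exactly, so the induction on $j$ has to be set up and carried through carefully, including the degenerate base values $D_0=0$, $D_1=1$. Everything else---the symmetry reduction of \eqref{eq:NewtonCFT}, the two Lucas identities, and the final cancellation---is routine once those closed forms are in hand.
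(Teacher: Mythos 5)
Your proposal is correct, but it takes a genuinely different route from the paper's proof. The paper never computes either side in closed form: its entire proof consists in establishing the single dynamical identity $F\left(p_{2^n-1}/q_{2^n-1}\right)=p_{2^{n+1}-1}/q_{2^{n+1}-1}$ (and the analogue for $r_n/s_n$), i.e.\ that the Newton map sends the convergent of index $2^n-1$ to the convergent of index $2^{n+1}-1$, which it proves by exploiting the period-two self-similarity $\left[a_0;a_1,\ldots,a_{2^{n+1}-1}\right]=\left[a_0;a_1,\ldots,a_{2^n-1},a_{2^n}+p_{2^n-1}/q_{2^n-1}\right]$, the parity identities of \lemref{Lem:IdentitiesPQRS}, and the reversal formula $q_{2^n}/q_{2^n-1}=\left[a_{2^n};a_{2^n-1},\ldots,a_1\right]$; iterating this gives \eqref{eq:NewtonCF0} and, by a parallel argument, \eqref{eq:NewtonCFT}, with no reference to Section 3. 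You instead evaluate both sides explicitly as $U D_{2^n-1}/D_{2^n}$ with $D_k$ the Lucas polynomials: the continued-fraction side via closed forms for the $q_m$ (your formulas $q_{2j-1}=(-1)^{j+1}D_{2j}/U^{j}$, $q_{2j}=(-1)^{j}D_{2j+1}/U^{j}$ are correct and the induction goes through), and the Newton side by importing \lemref{Lem:NewtonSierpinski2} and telescoping the Sierpinski sum via the cross identity $D_aD_{b+1}-D_{a+1}D_b=U^{b}D_{a-b}$; you then get \eqref{eq:NewtonCFT} from \eqref{eq:NewtonCF0} for free, since negating all partial quotients negates the value and $F^{(n)}(0)+F^{(n)}(T)=T$. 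Your route buys explicit Binet-type formulas for every quantity in the lemma, halves the work by the symmetry reduction, and your closing remark (that $F$ is conjugate to squaring via $\frac{F(X)-\alpha}{F(X)-\beta}=\left(\frac{X-\alpha}{X-\beta}\right)^{2}$) explains conceptually why the identity holds, something the paper's computation does not make visible. The costs are a dependence on \lemref{Lem:NewtonSierpinski2} (harmless---it is proved earlier and independently, so there is no circularity, but the paper keeps this lemma self-contained) and two standard Lucas-sequence facts you assert rather than prove, namely $h_m=D_{2^{m+1}}/D_{2^m}$ and the cross identity; both are routine, e.g.\ from the formal roots with $\alpha+\beta=T$, $\alpha\beta=U$ in $C\left(\sqrt{T^2-4U}\right)$, so this is a matter of filling in standard details, not a gap.
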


\begin{proof}
The lemma trivially holds for $n=0$.
Let  $p_n$, $q_n$ be as in \lemref{Lem:IdentitiesPQRS}.   First,
we show 
\begin{align}\label{eq:NewtonPQ}
F\left(\frac{p_{2^n-1}}{q_{2^n-1}}\right)=\frac{p_{2^{n+1}-1}}{q_{2^{n+1}-1}} \left(=\left[a_0;a_1,a_2,\ldots ,a_{2^{n+1}-1}\right]\right) \quad (n \ge 0).
\end{align}
The equality \eqref{eq:NewtonPQ} is clearly valid for $n = 0$.
Suppose $n \ge 1$.
In view of \lemref{Lem:IdentitiesPQRS}, we have an identity, for example for $n=2$,
\begin{align*}
\frac{p_{2^{n+1}-1}}{q_{2^{n+1}-1}} &=\left[0; U^{-1}T ,-T, U^{-1}T , -T, U^{-1}T, -T, U^{-1}T \right]\\
&=\left[0;U^{-1}T,-T,U^{-1}T,-T+\left[0;U^{-1}T,-T,U^{-1}T \right]\right]\\
&=\left[a_0;a_1,a_2,a_3,a_4+\left[a_0;a_1,a_2,a_3\right]\right]\\
&=\left[a_0;a_1,a_2,a_3,a_4+\frac{p_{2^n-1}}{q_{2^n-1}}\right]
\end{align*}
and in general, for $n \ge 1$
\begin{align*}
\frac{p_{2^{n+1}-1}}{q_{2^{n+1}-1}} &=\left[a_0;a_1,a_2,\ldots ,a_{2^n-1},a_{2^n}+\frac{p_{2^n-1}}{q_{2^n-1}}\right]\\
&=\frac{\left(a_{2^n}+\displaystyle\frac{p_{2^n-1}}{q_{2^n-1}}\right)p_{2^n-1}+p_{2^n-2}}{\left(a_{2^n}+\displaystyle\frac{p_{2^n-1}}{q_{2^n-1}}\right)q_{2^n-1}+q_{2^n-2}}
=\frac{\displaystyle\frac{p_{2^n}}{q_{2^n-1}}+\left(\frac{p_{2^n-1}}{q_{2^n-1}}\right)^2}{\displaystyle\frac{q_{2^n}}{q_{2^n-1}}+\frac{p_{2^n-1}}{q_{2^n-1}}},
\end{align*}
which together with the equality  $p_n=-Uq_{n-1}$ ($n=\mathrm{even}\ge 0$) in \lemref{Lem:IdentitiesPQRS} implies
\begin{align}\label{eq:NewtonInt}
\frac{p_{2^{n+1}-1}}{q_{2^{n+1}-1}}
=\left.\left(\left(\frac{p_{2^n-1}}{q_{2^n-1}}\right)^2-U \right)\right/\left(\frac{q_{2^n}}{q_{2^n-1}}+\frac{p_{2^n-1}}{q_{2^n-1}}\right).
\end{align}
On the other hand, we have 
\begin{align}\label{eq:qfrac}
\begin{split}
\frac{q_{2^n}}{q_{2^n-1}} &=\left[a_{2^n};a_{2^n-1},\ldots ,a_1\right]=\left[-T;a_{2^n-1},\ldots ,a_1\right]\\
&=-T+\left[0;a_1,\ldots ,a_{2^n-1}\right]=\frac{p_{2^n-1}}{q_{2^n-1}}-T.
\end{split}
\end{align}
In view of \eqref{eq:NewtonInt}, \eqref{eq:qfrac}, and  $F(X)=\frac{X^2-U}{2 X-T}$, we get
\[
F\left(\frac{p_{2^n-1}}{q_{2^n-1}}\right)=\frac{\left(\frac{p_{2^n-1}}{q_{2^n-1}}\right)^2-U}{2\frac{p_{2^n-1}}{q_{2^n-1}}-T}=\frac{p_{2^{n+1}-1}}{q_{2^{n+1}-1}}.
\]
Thus, we have proved \eqref{eq:NewtonPQ}. Using \eqref{eq:NewtonPQ} repeatedly, we get  
\begin{align*}
&\left[0;a_1(U,T),a_2(U,T),\ldots ,a_{2^n-1}(U,T)\right]\\
&= \frac{p_{2^n-1}}{q_{2^n-1}}=F\left(\frac{p_{2^{n-1}-1}}{q_{2^{n-1}-1}}\right)=F^{(2)}\left(\frac{p_{2^{n-2}-1}}{q_{2^{n-2}-1}}\right)
=\cdots
=F^{(n)}\left(\frac{p_{2^0-1}}{q_{2^0-1}}\right)\\
&=F^{(n)}\left(\frac{p_0}{q_0}\right)=F^{(n)}(0),
\end{align*}
which complete the proof of \eqref{eq:NewtonCF0}.

Secondly, we show
\begin{align}\label{eq:NewtonRS}
F\left(\frac{r_{2^n-1}}{s_{2^n-1}}\right)=\frac{r_{2^{n+1}-1}}{s_{2^{n+1}-1}} \left(=\left[b_0;b_1,b_2,\ldots ,b_{2^{n+1}-1}\right]\right) \quad (n \ge 0),
\end{align}
since \eqref{eq:NewtonRS} implies \eqref{eq:NewtonCFT}.
It is clear for $n = 0$, so that we suppose $n \ge 1$.
For $n=2$
\begin{align*}
\frac{r_{2^{n+1}-1 }}{s_{2^{n+1}-1}} &=\left[T;-U^{-1}T,T,-U^{-1}T,T,-U^{-1}T,T,-U^{-1}T  \right]\\
&=\left[T;-U^{-1}T,T,-U^{-1}T,\left[T;-U^{-1}T,T,-U^{-1}T \right] \right]\\
&=\left[b_0;b_1,b_2,b_3,\frac{r_{2^n-1}}{s_{2^n-1}}\right],
\end{align*}
and in general, for $n \ge 1$
\begin{align*}
\frac{r_{2^{n+1}-1}}{s_{2^{n+1}-1}} &=\left[b_0;b_1,b_2,\ldots ,b_{2^n-1},\frac{r_{2^n-1}}{s_{2^n-1}}\right]\\
&=\frac{\displaystyle\frac{r_{2^n-1}}{s_{2^n-1}}r_{2^n-1}+r_{2^n-2}}{\displaystyle\frac{r_{2^n-1}}{s_{2^n-1}}s_{2^n-1}+s_{2^n-2}}
=\frac{\displaystyle\left(\frac{r_{2^n-1}}{s_{2^n-1}}\right)^2+\frac{r_{2^n-2}}{s_{2^n-1}}}{\displaystyle\frac{r_{2^n-1}}{s_{2^n-1}}+\frac{s_{2^n-2}}{s_{2^n-1}}}
=
\frac{\displaystyle\left(\frac{r_{2^n-1}}{s_{2^n-1}}\right)^2-U}{\displaystyle\frac{r_{2^n-1}}{s_{2^n-1}}+\frac{s_{2^n-2}}{s_{2^n-1}}}.
\end{align*}
Since
\[
\frac{r_{2^n-1}}{s_{2^n-1}}=\left[b_0;b_1,b_2,\ldots ,b_{2^n-1}\right]
\]
with $b_n=T$ ($n=\mathrm{even}\ge 0$), $b_n=-U^{-1}T$ ($n=\mathrm{odd}\ge 1$), we have
\begin{align*}
&\frac{r_{2^n-1}}{s_{2^n-1}}-T\\
&=\left[0;b_1,b_2,\ldots ,b_{2^n-2},b_{2^n-1}\right]
=\left[0;-U^{-1}b_0,-U b_1,\ldots ,-U b_{2^n-3},-U^{-1}b_{2^n-2}\right]\\
&=-U\left[0;b_0,b_1,\ldots , b_{2^n-3},b_{2^n-2}\right]
=-U\left/\frac{r_{2^n-2}}{s_{2^n-2}}\right.=\frac{s_{2^n-2}}{-U^{-1}r_{2^n-2}}=\frac{s_{2^n-2}}{s_{2^n-1}}
\end{align*}
by \lemref{Lem:IdentitiesPQRS}.
Thus, we get 
\[
\frac{r_{2^{n+1-1 }}}{s_{2^{n+1}-1}}=\frac{\left(\frac{r_{2^n-1}}{s_{2^n-1}}\right){}^2-U
  }{2\frac{r_{2^n-1}}{s_{2^n-1}}-T}=F\left(\frac{r_{2^n-1}}{s_{2^n-1}}\right).
\]
\end{proof}

\section{The speed of Convergence}\label{SpeedConvergence}


\begin{lem}\label{Lem:h_n__g_n}
Let  $t, u \in \mathbb{C}$  with $|t| >2$, $|u|=1$.  Let
$\left\{h_n\right\}_{n\ge 0}$ be a sequence of 
complex numbers defined by
\[
h_n:=h_{n-1}^2-2u^{2^{n-1}} \left(n \geq 1, h_0:=t\right).
\]
Let $\left\{g_n\right\}_{n\ge 0}$ be a sequence of real numbers defined by
\[
g_n:=g_{n-1}^2-2 \left(n\geq 1, g_0:=|t|\right).
\]
Then  $\left|h_n\right| >2$, $g_n>2$, and 
\[
\left|h_n\right| \ge g_n=\rho^{2^n}+\frac{1}{\rho^{2^n}}  \left(\rho :=\frac{|t|+\sqrt{|t|^2-4}}{2}>1\right) 
\]
holds for all $n\ge 0$.   
\end{lem}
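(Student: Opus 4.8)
The plan is to treat the three assertions in the natural order: first pin down the closed form for the real sequence $g_n$, then read off $g_n > 2$, and finally transfer the lower bound to the complex sequence $h_n$ by induction.

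First I would verify the closed form $g_n = \rho^{2^n} + \rho^{-2^n}$ directly by induction. Since $|t| > 2$, the number $\rho = \left(|t| + \sqrt{|t|^2 - 4}\right)/2$ is the larger root of $X^2 - |t|X + 1 = 0$, so $\rho > 1$ and $\rho + \rho^{-1} = |t| = g_0$, which is the base case. For the inductive step one computes
\[
\left(\rho^{2^n} + \rho^{-2^n}\right)^2 - 2 = \rho^{2^{n+1}} + 2 + \rho^{-2^{n+1}} - 2 = \rho^{2^{n+1}} + \rho^{-2^{n+1}},
\]
which is exactly the recurrence $g_{n+1} = g_n^2 - 2$. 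With the closed form in hand, $g_n > 2$ is immediate: since $\rho > 1$ we have $\rho^{2^n} > 1$, and $x + x^{-1} > 2$ for every real $x > 1$, so $g_n = \rho^{2^n} + \rho^{-2^n} > 2$.

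Next I would prove $|h_n| \geq g_n$ by induction on $n$, carrying along the auxiliary fact $|h_n| > 2$. The base case $n = 0$ is the equality $|h_0| = |t| = g_0$. For the inductive step, using $|u| = 1$ so that $\left|2u^{2^n}\right| = 2$, the reverse triangle inequality gives
\[
|h_{n+1}| = \left| h_n^2 - 2u^{2^n} \right| \geq |h_n|^2 - 2.
\]
The inductive hypothesis $|h_n| \geq g_n > 2$ shows $|h_n|^2 - 2 > 2 > 0$ (so the right-hand side is genuinely a lower bound and $|h_{n+1}| > 2$), and moreover, since $x \mapsto x^2 - 2$ is increasing on $x \geq 0$, it yields $|h_n|^2 - 2 \geq g_n^2 - 2 = g_{n+1}$. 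Chaining these gives $|h_{n+1}| \geq g_{n+1}$, which closes the induction.

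The substance of the argument is the single inequality $|h_{n+1}| \geq |h_n|^2 - 2$. Everything hinges on the fact that $|u| = 1$ makes the perturbation $2u^{2^n}$ have modulus exactly $2$, so the worst case for the complex recurrence matches the real recurrence $g_{n+1} = g_n^2 - 2$ term for term; this is precisely why the real sequence $g_n$ is the sharp comparison. I expect no serious obstacle beyond being careful that the reverse triangle inequality points in the needed direction and that $|h_n| > 2$ is maintained throughout, so that the quantity $|h_n|^2 - 2$ is positive and the comparison $|h_n|^2 - 2 \geq g_n^2 - 2$ is valid; both points are handled by carrying the auxiliary bound $|h_n| > 2$ through the induction.
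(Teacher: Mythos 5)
Your proof is correct and follows essentially the same route as the paper's: the closed form $g_n=\rho^{2^n}+\rho^{-2^n}$ comes from $\rho$, $\rho^{-1}$ being the roots of $X^2-|t|X+1=0$, and the comparison $|h_n|\ge g_n$ is proved by induction via the reverse triangle inequality $|h_{n+1}|\ge |h_n|^2-2\ge g_n^2-2=g_{n+1}$. The only difference is organizational: the paper first establishes $|h_n|>2$, $g_n>2$ as a standalone fact (from $|t^2-2u|>2$) before the comparison induction, whereas you carry that bound along inside the induction, which is if anything slightly more explicit.
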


\begin{proof}
Since $\left|t^2-2u \right| >2$ holds for $t, u \in \mathbb{C}$  with $|t|>2$ and $|u|=1$, we
have $\left|h_n\right| > 2$, $g_n > 2$ ($\forall n\ge 0$).  We show $\left|h_n\right| \ge g_n$
by induction.  It is clear that
$\left|h_0\right| \ge g_0>2$.  Suppose that $|h_n| \ge g_n$ holds for an integer
$n\ge 0$. Then we get
\[
\left| h_{n+1}\right|=\left|h_n^2-2u^{2^n}\right| \geq |h_n|^2 -2\ge g_n^2 -2 =g_{n+1}.
\]
It is easy to see $g_0=|t | = \rho +\frac{1}{\rho}$ and $g_n=\rho^{2^n}+\frac{1}{\rho^{2^n}}$ by
considering that
$\rho$, $\frac{1}{\rho} =\frac{|t|-\sqrt{|t|^2-4}}{2}$ are the two real roots of $X^2-|t |X+1=0$.
\end{proof}

We define $G_3(u) =G_2(u) \cup \left\{\pm 2,\pm 2\sqrt{-1}\right\}$  ($u \in \left\{\pm 1,\pm \sqrt{-1}\right\}$),
see Figure~\ref{fg:G3}.
Then we have the following lemma.

\begin{figure}
\begin{center}
\includegraphics[width=0.4\textwidth,clip]{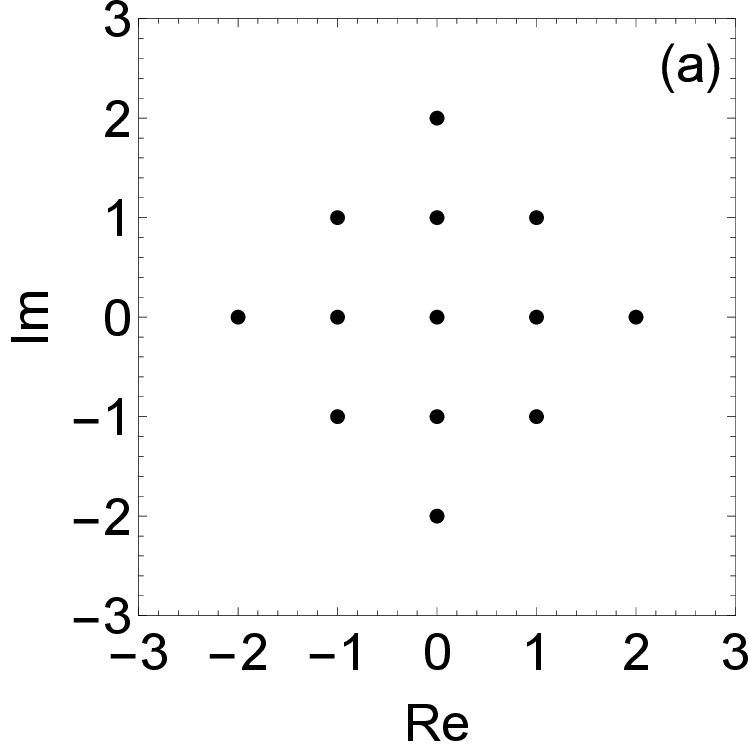}
\hspace*{0.05\textwidth}\includegraphics[width=0.4\textwidth,clip]{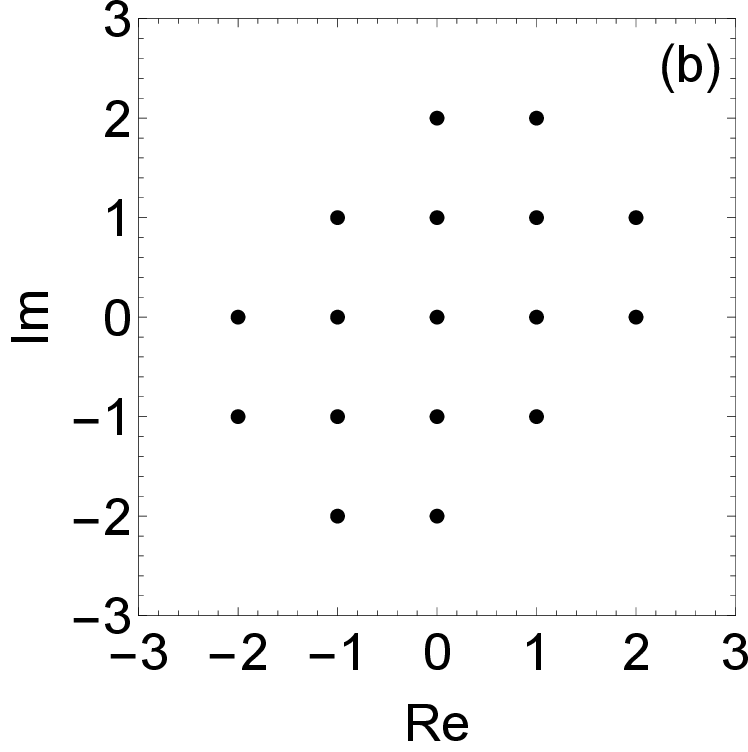}\\
\vspace*{0.025\textwidth}
\includegraphics[width=0.4\textwidth,clip]{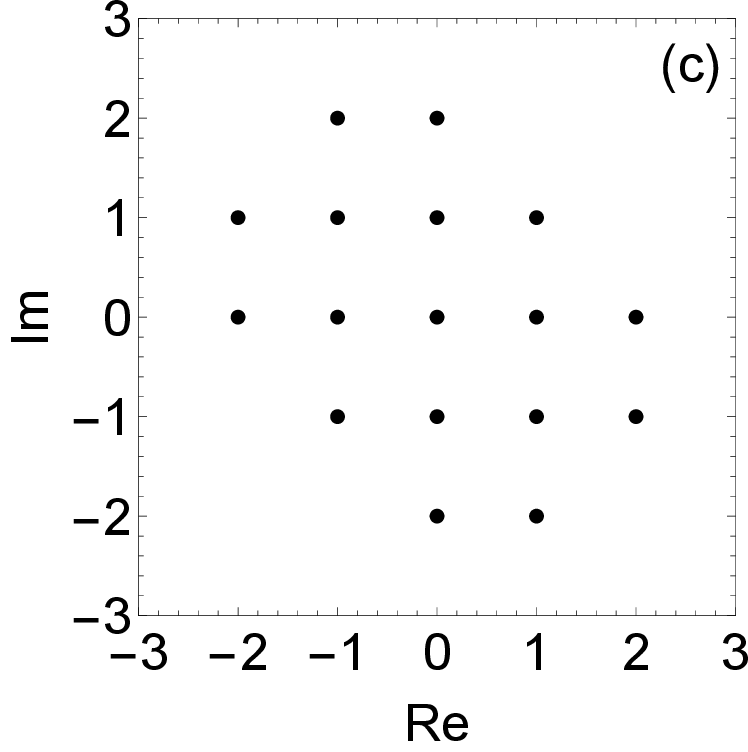}
\end{center}
\caption{\label{fg:G3}
(a) $G_3(1)=G_3(-1)$,
(b) $G_3\left(\sqrt{-1}\right)$, and
(c) $G_3\left(-\sqrt{-1}\right)$.}
\end{figure}

\begin{lem}\label{Lem:SpeedOfConvergence}
Let $u, t$  be complex numbers satisfying 
$u \in \left\{\pm 1,\pm \sqrt{-1}\right\}, t\in R_G\backslash G_3(u)$.
Then
\begin{align*}
\left|\frac{t\left(1-\sqrt{1-4u/t^2}\right)}{2}- F^{(n)}(0)\right| &<\frac{2}{ \rho^{2^{n+1}-1}},\\
\left|\frac{t\left(1+\sqrt{1-4u/t^2}\right)}{2}- F^{(n)}(t)\right| &<\frac{2}{ \rho^{2^{n+1}-1}}
\end{align*}
hold for all $n \ge 1$ with  $\rho =\frac{|t|+\sqrt{|t|^2-4}}{2}>1$.
\end{lem}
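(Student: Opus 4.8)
The plan is to identify the Newton error with a tail of the Sierpinski series and then control that tail by the auxiliary real sequence $g_n$ furnished by \lemref{Lem:h_n__g_n}. First I would specialize the rational-function identities of \lemref{Lem:NewtonSierpinski2} to the complex values $t,u$ (legitimate because every $h_m$ is nonzero by \lemref{Lem:h_n__g_n}), which gives $F^{(n)}(0)=S_{n-1}(t,u)=\sum_{0\le m\le n-1}u^{2^m}/(h_0\cdots h_m)$ and, dually, $F^{(n)}(t)=t-S_{n-1}(t,u)$. Since $\alpha=t\left(1-\sqrt{1-4u/t^2}\right)/2$ and $\beta=t-\alpha$, one has $\beta-F^{(n)}(t)=-(\alpha-F^{(n)}(0))$, so the two errors have equal modulus and it suffices to treat $\alpha-F^{(n)}(0)$; once the identity $\alpha=S(t,u)=\sum_{m\ge 0}u^{2^m}/(h_0\cdots h_m)$ is in hand, this error is precisely the tail $\sum_{m\ge n}u^{2^m}/(h_0\cdots h_m)$.

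Before estimating I must justify $\alpha=S(t,u)$, in particular that the Newton iterates converge to the small root rather than to $\beta$. This is where the hypothesis $t\in R_G\setminus G_3(u)$ enters: for all four values of $u$ the set $G_3(u)$ contains every Gaussian integer of modulus $\le 2$, so $|t|>2$ and \lemref{Lem:h_n__g_n} applies, yielding $|h_m|\ge g_m=\rho^{2^m}+\rho^{-2^m}$ and hence $|h_0\cdots h_m|\ge g_0\cdots g_m$. The resulting doubly-exponential decay makes the series absolutely convergent, so $F^{(n)}(0)$ tends to a limit $L$; as $F(X)=(X^2-U)/(2X-T)$ is continuous away from its pole $t/2$ and $|L|<2/|t|<1<|t|/2$, passing to the limit in $F(F^{(n)}(0))=F^{(n+1)}(0)$ shows $L$ is a fixed point of $F$, i.e. a root of $f$, and $|L|<1<|\beta|$ forces $L=\alpha$.

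For the estimate itself I would use the telescoping factorisation $g_k=(\rho^{2^{k+1}}-\rho^{-2^{k+1}})/(\rho^{2^k}-\rho^{-2^k})$, which collapses the product to
\[
g_0g_1\cdots g_m=\frac{\rho^{2^{m+1}}-\rho^{-2^{m+1}}}{\rho-\rho^{-1}}.
\]
Because $g_k>2$ and the sequence is increasing, the consecutive ratios $1/g_{m+1}$ are all below $1/2$, so the tail is dominated by a geometric series and
\[
\Bigl|\alpha-F^{(n)}(0)\Bigr|\le\sum_{m\ge n}\frac{1}{g_0\cdots g_m}<\frac{2}{g_0\cdots g_n}=\frac{2(\rho-\rho^{-1})}{\rho^{2^{n+1}}-\rho^{-2^{n+1}}}.
\]
It then remains to verify the clean algebraic inequality $2(\rho-\rho^{-1})/(\rho^{2^{n+1}}-\rho^{-2^{n+1}})<2/\rho^{2^{n+1}-1}$, which after cross-multiplication reduces to $\rho^{2^{n+2}-2}>1$, true since $\rho>1$.

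I expect the main obstacle to be not the final chain of inequalities, which is routine telescoping followed by a geometric-series comparison, but the bookkeeping that makes everything rigorous: checking that $R_G\setminus G_3(u)$ genuinely forces $|t|>2$ for each $u\in\{\pm 1,\pm\sqrt{-1}\}$ (so that \lemref{Lem:h_n__g_n} is available and $\rho>1$), and pinning down that the common limit of the Newton iterates is the small root $\alpha$ and not $\beta$. The companion bound for $\beta$ and $F^{(n)}(t)$ then costs nothing, since the two errors differ only by a sign.
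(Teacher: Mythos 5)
Your proposal is correct, but it reaches the estimate by a genuinely different route than the paper. Both arguments share the same skeleton: specialize \lemref{Lem:NewtonSierpinski2} to get $F^{(n)}(0)=S_{n-1}(t,u)$ and $F^{(n)}(t)=t-S_{n-1}(t,u)$, invoke \lemref{Lem:h_n__g_n} for $|h_m|\ge g_m$, and dispose of the $\beta$ estimate by the sign symmetry $\beta-F^{(n)}(t)=-(\alpha-F^{(n)}(0))$. The difference is in how the error $\alpha-F^{(n)}(0)$ is identified. The paper makes it \emph{exact}: it first proves the branch-consistency identity $h_m^2\sqrt{1-4u^{2^m}/h_m^2}=h_{m+1}\sqrt{1-4u^{2^{m+1}}/h_{m+1}^2}$ (this is the delicate step, controlled by the argument bound $\pi/6+\pi/4+\pi/4<\pi$), telescopes the summands of $S_n$, and obtains the closed-form remainder
\[
\alpha-F^{(n+1)}(0)=\frac{2u^{2^{n+1}}}{h_0h_1\cdots h_{n+1}\left(1+\sqrt{1-4u^{2^{n+1}}/h_{n+1}^2}\right)},
\]
which it then bounds by $2/\rho^{2^{n+2}-1}$ using $g_m>\rho^{2^m}$. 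You instead identify the limit $L=S(t,u)$ of the Newton iterates as the small root by a soft fixed-point argument (absolute convergence, continuity of $F$ away from its pole $t/2$, and $|L|<1<|\beta|$ via \lemref{Lem:AbsoluteValuesOfRoots}), and then bound the series tail using the exact product formula $g_0g_1\cdots g_m=\left(\rho^{2^{m+1}}-\rho^{-2^{m+1}}\right)/\left(\rho-\rho^{-1}\right)$ together with a geometric comparison; your final cross-multiplication check $\rho^{2^{n+2}-2}>1$ is right. What each approach buys: yours completely avoids the paper's trickiest ingredient, the bookkeeping of principal square-root branches in $\mathbb{C}$, and is in that sense more robust and elementary; the paper's argument is fully finitary (no passage to a limit is ever needed), and yields an exact expression for the error at every step, which in particular establishes the identity \eqref{eq:ComplexQuadraticIrrationalAscendingCF} with an explicit remainder rather than just an upper bound. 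One small point of care in your write-up: the specialization of the rational-function identities at $(t,u)$, and the identity $F\left(F^{(n)}(0)\right)=F^{(n+1)}(0)$ as complex numbers, deserve the one-line justification that $|S_{m}(t,u)|<2/|t|<1<|t|/2$ for all $m$, so the concrete iteration never meets the pole; you state the needed inequality, so this is cosmetic rather than a gap.
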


\begin{proof}
Remark that 
$|t| >2$.  Since
\begin{align*}
\left(h_n^2\sqrt{1-4u^{2^n}/h_n^2}\right)^2 &=h_n^4-4u^{2^n}h_n^2=\left(h_n^2-2u^{2^n}\right)^2-4u^{2^{n+1}}\\
&=h_{n+1}^2-4u^{2^{n+1}}=h_{n+1}^2\left(1-4u^{2^{n+1}}/h_{n+1}^2\right),
\end{align*}
$h_n^2\sqrt{1-4u^{2^n}/h_n^2}$ is equal to $h_{n+1}\sqrt{1-4u^{2^{n+1}}/h_{n+1}^2}$
or $-h_{n+1}\sqrt{1-4u^{2^{n+1}}/h_{n+1}^2}$.
We show that the latter is not suitable.
In fact, $\left| u^{2^n}/h_n^2 \right|<1/4$ (cf. \lemref{Lem:h_n__g_n}),
so that we have
\[
\left| \arg \frac{h_{n+1}}{h_n^2} \right|=
\left| \arg \left(1- \frac{2u^{2^n}}{h_n^2}\right) \right| < \frac{\pi}{6}
\quad \textrm{and} \quad
\left| \arg \sqrt{1- \frac{4u^{2^n}}{h_n^2}} \right| < \frac{\pi}{4}
\]
for any $n \ge 0$.
Thus,
\[
\left| \arg \left( \left(1- \frac{2u^{2^n}}{h_n^2}\right) 
\sqrt{1- \frac{4u^{2^{n+1}}}{h_{n+1}^2}} \left/
\sqrt{1- \frac{4u^{2^n}}{h_n^2}} \right. \right) \right| <
\frac{\pi}{6} + \frac{\pi}{4} + \frac{\pi}{4} < \pi,
\]
and we have $h_n^2\sqrt{1-4u^{2^n}/h_n^2} \neq -h_n^2 \left(1-2u^{2^n}/h_n^2\right)\sqrt{1-4u^{2^{n+1}}/h_{n+1}^2}$,
which implies $h_n^2\sqrt{1-4u^{2^n}/h_n^2} = h_{n+1}\sqrt{1-4u^{2^{n+1}}/h_{n+1}^2}$.
Hence, we get
\begin{align*}
\frac{h_n\left(1-\sqrt{1-4u^{2^n}/h_n^2}\right)}{2 h_0h_1\cdots h_{n-1}}-\frac{u^{2^n}}{h_0h_1\cdots h_n}
&=\frac{h_n^2-2u^{2^n}-h_n^2\sqrt{1-4u^{2^n}/h_n^2}}{2 h_0h_1\cdots h_n}\\
&=\frac{h_{n+1}-h_{n+1}\sqrt{1-4u^{2^{n+1}}/h_{n+1}^2}}{2 h_0h_1\cdots h_n},
\end{align*}
so that
\[
\frac{u^{2^n}}{h_0h_1\cdots h_n}= \frac{h_n-h_n\sqrt{1-4u^{2^n}/h_n^2 }}{2 h_0h_1\cdots h_{n-1}}-\frac{h_{n+1}-h_{n+1}\sqrt{1-4u^{2^{n+1}}/h_{n+1}^2}}{2
h_0h_1\cdots h_n},
\]
which together with  \lemref{Lem:NewtonSierpinski2} implies  
\begin{align*}
& F^{(n+1)}(0)\\
&=\sum_{0\le m\le n}    \frac{u^{2^m}}{h_0h_1\cdots h_m}\\
&=\sum_{0\le m\le n} \left( \frac{h_m-h_m\sqrt{1-4u^{2^m}/h_m^2 }}{2 h_0h_1\cdots
h_{m-1}}-\frac{h_{m+1}-h_{m+1}\sqrt{1-4u^{2^{m+1}}/h_{m+1}^2}}{2 h_0h_1\cdots h_m}\right)\\
&=\frac{t-t\sqrt{1-4u/t^2}}{2}-\frac{h_{n+1}-h_{n+1}\sqrt{1-4u^{2^{n+1}}/h_{n+1}^2}}{2 h_0h_1\cdots h_n}.
\end{align*}
Hence we get
\begin{align*}
&\left|\frac{t\left(1-\sqrt{1-4u/t^2}\right)}{2}- F^{(n+1)}(0)\right|
=\left|\frac{h_{n+1}\left(1-\sqrt{1-4u^{2^{n+1}}/h_{n+1}^2}\right)}{2
h_0h_1\cdots h_n}\right|\\
&=\left|\frac{2u^{2^{n+1}}}{h_0h_1\cdots h_nh_{n+1}\left(1+\sqrt{1-4u^{2^{n+1}}/h_{n+1}^2}\right)}\right|\\
&<\frac{2}{\rho^1\rho^2\cdots \rho^{2^n}\rho^{2^{n+1}}}
=\frac{2}{\rho^{2^{n+2}-1}},
\end{align*}
which implies 
\begin{align*}
\left|\frac{t\left(1+\sqrt{1-4u/t^2}\right)}{2}- F^{(n+1)}(t)\right|
&=\left|\frac{t\left(1-\sqrt{1-4u/t^2}\right)}{2}- F^{(n+1)}(0)\right|\\
&<\frac{2}{ \rho^{2^{n+2}-1}}
\end{align*}
since  $F^{(n+1)}(t)=t- F^{(n+1)}(0)$ by \lemref{Lem:NewtonSierpinski2}.
\end{proof}

\section{Main results}\label{MainResults}

Summing up Lemmas \ref{Lem:NewtonSierpinski2}, \ref{Lem:NewtonHurwitz}, we have the following

\begin{thm}\label{Thm:IdentitiesAsRationalFunctions}
Four identities
\begin{align*}
F^{(n+1)}(0) &=S_n(T,U) =\left[0;a_1,a_2,\ldots ,a_{2^{n+1}-1}\right],\\
F^{(n+1)}(T) &=T-S_n(T,U) =\left[b_0;b_1,b_2,\ldots ,b_{2^{n+1}-1}\right]
\end{align*}
are valid as rational functions in the field $C$, where $F^{(n)}(X)$
is the $n$-fold iteration of $F(X)$, $F(X)=F(X,f)$ is the Newton iterator for
$f(X)=X^2-T X+U$,
and
\begin{align*}
&S_n(T,U) := \sum_{0\le m\le n} \frac{U^{2^m}}{h_0(T) h_1(T,U)\cdots h_m(T,U)} \in  C,\\
&h_0 =T, \quad h_{n+1}=h_n^2-2U^{2^n} ~ (n\ge 0),\\
&a_n =U^{-1}T ~ (n=\mathrm{odd}\ge 1), \quad a_n=-T ~ (n=\mathrm{even}\ge 2),\\
&b_n =-U^{-1}T ~ (n=\mathrm{odd}\ge 1), \quad b_n=T ~ (n=\mathrm{even}\ge 0).    
\end{align*}
\end{thm}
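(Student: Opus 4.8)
The plan is to obtain the theorem as an immediate consequence of the two lemmas already established, with no new computation required beyond bookkeeping of indices. The four identities split naturally into two ``columns'': a Newton--Sierpinski column and a Newton--continued-fraction column, each supplied by one of the preceding lemmas.

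First I would read off the equalities $F^{(n+1)}(0)=S_n(T,U)$ and $F^{(n+1)}(T)=T-S_n(T,U)$ directly from \lemref{Lem:NewtonSierpinski2}; these are exactly the first equalities in each row of the displayed statement, and they hold in $C$ for every $n\ge 0$.

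Next I would apply \lemref{Lem:NewtonHurwitz}, but with $n+1$ substituted for $n$, so that \eqref{eq:NewtonCF0} and \eqref{eq:NewtonCFT} become
\[
F^{(n+1)}(0)=\left[a_0;a_1,\ldots ,a_{2^{n+1}-1}\right],\qquad
F^{(n+1)}(T)=\left[b_0;b_1,\ldots ,b_{2^{n+1}-1}\right].
\]
Here I must confirm two pieces of index bookkeeping: that replacing $n$ by $n+1$ turns the length $2^n-1$ into the asserted $2^{n+1}-1$, and that the partial quotients $a_m$, $b_m$ are precisely those listed in the theorem, which is their definition in \lemref{Lem:IdentitiesPQRS} carried verbatim. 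I would also note that $a_0=0$, again by that definition, which lets me rewrite $\left[a_0;a_1,\ldots \right]$ as the stated $\left[0;a_1,\ldots \right]$.

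Finally, transitivity of equality in the field $C$ chains the two columns: $F^{(n+1)}(0)$ equals both $S_n(T,U)$ and $\left[0;a_1,\ldots ,a_{2^{n+1}-1}\right]$, giving the full top row, and likewise $F^{(n+1)}(T)$ equals both $T-S_n(T,U)$ and $\left[b_0;b_1,\ldots ,b_{2^{n+1}-1}\right]$, giving the bottom row. Since the substantive work---the Newton-iteration recursion behind \lemref{Lem:NewtonSierpinski1} and \lemref{Lem:NewtonSierpinski2}, and the continued-fraction induction behind \lemref{Lem:NewtonHurwitz}---has already been carried out, I do not anticipate any genuine obstacle; the only point demanding care is the shift of index from $n$ to $n+1$ together with the observation $a_0=0$, which aligns the two lemmas with the exact form stated here.
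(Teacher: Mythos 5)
Your proposal is correct and is exactly the paper's own argument: the theorem is stated there as an immediate consequence of \lemref{Lem:NewtonSierpinski2} and \lemref{Lem:NewtonHurwitz}, with precisely the index shift $n\mapsto n+1$ and the observation $a_0=0$ that you spell out. No gap; your version merely makes the bookkeeping explicit where the paper leaves it implicit.
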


In view of Lemma \ref{Lem:AbsoluteValuesOfRoots}, \defnref{Defn:PrincipalValueOfSquareRoot}, Lemmas  \ref{Lem:OpenDiscsIncludingRoots}, \ref{Lem:HCFGExpansions}  together with
\thmref{Thm:IdentitiesAsRationalFunctions},
we get the assertions (1)-(3) of \thmref{Thm:IdentitiesAsComplexNumbers}. The assertion (4) follows from \lemref{Lem:SpeedOfConvergence}.

\begin{thm}\label{Thm:IdentitiesAsComplexNumbers}
Let $G_2(u)$ be the finite subsets of
$R_G$ given in \lemref{Lem:OpenDiscsIncludingRoots}.   
Let  $u \in \left\{\pm 1, \pm \sqrt{-1}\right\}$, $t \in R_G \backslash G_2(u)$.
Let $F(X)$, $F^{(n)}(X)$, $S_n(T,U)$ be as in
\thmref{Thm:IdentitiesAsRationalFunctions}.
Then
\begin{enumerate}[(1)]
\item 
$f(X)=X^2-t X+u$ is irreducible over $K_G$ having two roots
$\alpha$, $\beta$ with $|\alpha |<1/2$, $|\beta -t|<1/2$,

\item
The Hurwitz continued fraction of $\alpha$, $\beta$ are given by
\[
\alpha =\left(HCF_G\right)\left[0;a_1,a_2,\ldots \right]
\]
with $a_n=u^{-1}t$ ($n=\mathrm{odd}\ge 1$), $a_n=-t$ ($n=\mathrm{even}\ge 2$);
\[
\beta =\left(HCF_G\right)\left[b_0;b_1,b_2,\ldots \right]
\]
with $b_n=-u^{-1}t$ ($n=\mathrm{odd}\ge 1$), $b_n=t$ ($n=\mathrm{even}\ge 0$).     

\item
Equalities
\begin{align*}
F^{(n+1)}(0) &=S_n(t,u)
=\left(HCF_G\right)\left[0;a_1,a_2,\ldots ,a_{2^{n+1}-1}\right],\\
F^{(n+1)}(t) &=t-S_n(t,u)
=\left(HCF_G\right)\left[b_0;b_1,b_2,\ldots ,b_{2^{n+1}-1}\right],
\end{align*}
hold for all $n\ge 0$.

\item
If in  particular, $u \in \left\{\pm 1,\pm \sqrt{-1}\right\}$, $t \in R_G \backslash G_3(u)$
with  $G_3(u)$
defined in Section \ref{SpeedConvergence}, then $\left|\alpha -F^{(n)}(0)\right|<\frac{2}{\rho^{2^{n+1}-1}}$, $\left|\beta
-F^{(n)}(t)\right| \allowbreak < \allowbreak \frac{2}{\rho^{2^{n+1}-1}}$
are valid for all $n \ge 1$, where $\rho =\frac{|t|+\sqrt{|t|^2-4}}{2}>1$.
\end{enumerate}
\end{thm}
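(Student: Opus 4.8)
The plan is to read off \thmref{Thm:IdentitiesAsComplexNumbers} as the arithmetic specialization of the formal identities already established in Sections~\ref{HurwitzContinuedFractionExpansion}--\ref{SpeedConvergence}. Assertions (1) and (2) concern the location and the Hurwitz expansions of the two roots and follow from the lemmas of Section~\ref{HurwitzContinuedFractionExpansion}; assertion (3) is the numerical image of \thmref{Thm:IdentitiesAsRationalFunctions} under the substitution $T\mapsto t$, $U\mapsto u$; and assertion (4) is \lemref{Lem:SpeedOfConvergence} once its hypothesis is verified.

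First I would dispose of (1) and (2). Since $G_2(u)\supset G_1(u)$, \lemref{Lem:AbsoluteValuesOfRoots} gives roots with $|\alpha|<1<|\beta|$, while \lemref{Lem:OpenDiscsIncludingRoots} sharpens this to $\alpha\in B(0,1/2)$ and $\beta\in B(t,1/2)$, i.e. $|\alpha|<1/2$ and $|\beta-t|<1/2$. Irreducibility of $f$ over $K_G$ I would deduce from the continued fraction: were $f$ reducible, its roots would lie in $K_G$ and, by property (H1), their $HCF_G$ expansions would terminate; but \lemref{Lem:HCFGExpansions} exhibits infinite (eventually periodic) expansions, a contradiction. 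For (2) I would quote \lemref{Lem:HCFGExpansions} for the explicit periodic patterns, convergence being supplied by (H2). The point to record is that the listed partial quotients are legitimate Hurwitz digits: because $t\notin G_2(u)\supset D\supset\{0,\pm1,\pm\sqrt{-1}\}$ and multiplication by the unit $u^{-1}$ permutes $\{0,\pm1,\pm\sqrt{-1}\}$ while fixing $0$, neither $-t$ nor $u^{-1}t$ lies in $\{0,\pm1,\pm\sqrt{-1}\}$, so every $a_n,b_n$ belongs to $R_G\setminus\{0,\pm1,\pm\sqrt{-1}\}$ as required.

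For (3) I would specialize the four identities of \thmref{Thm:IdentitiesAsRationalFunctions}, which hold in $C=\mathbb{Q}(T,U)$, at the point $(t,u)$. A formal identity of rational functions descends to a numerical equality exactly where the denominators survive the substitution, so the crux is the nonvanishing of $h_m(t,u)$ for $0\le m\le n$; granting this, the reduced denominator of the common value $F^{(n+1)}(0)=S_n=p_{2^{n+1}-1}/q_{2^{n+1}-1}$ divides $h_0h_1\cdots h_n$, so the single condition $h_m(t,u)\neq0$ legitimizes every substitution at once (so that, in particular, each Newton iterate $F^{(k)}(0)$ is a well-defined finite complex number). This apparent obstacle dissolves upon observing that $\alpha+\beta=t$ and $\alpha\beta=u$ yield the closed form $h_m=\alpha^{2^m}+\beta^{2^m}$ (indeed $h_0=t$ and $h_{m+1}=h_m^2-2u^{2^m}$): since $|\alpha|<1<|\beta|$ forces $|\beta|^{2^m}>|\alpha|^{2^m}$, we get $h_m\neq0$ for every $m$. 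Finally, the truncation $[0;a_1,\dots,a_{2^{n+1}-1}]$ of \thmref{Thm:IdentitiesAsRationalFunctions} becomes, after specialization with the admissible digits of (2), precisely the $(2^{n+1}-1)$-th $HCF_G$ convergent of $\alpha$, whose denominator is nonzero by Hurwitz's theory; this gives the chain $F^{(n+1)}(0)=S_n(t,u)=(HCF_G)[0;a_1,\dots,a_{2^{n+1}-1}]$, and likewise for $\beta$.

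Assertion (4) I would obtain directly from \lemref{Lem:SpeedOfConvergence}: since $G_3(u)=G_2(u)\cup\{\pm2,\pm2\sqrt{-1}\}$ contains every point of $R_G$ with $|t|\le2$, the hypothesis $t\in R_G\setminus G_3(u)$ forces $|t|>2$, so $\rho=(|t|+\sqrt{|t|^2-4})/2>1$ is well defined and the two stated bounds are exactly the conclusion of that lemma. The only genuinely delicate point in the entire argument is the one flagged above---the passage from the generic (rational-function) identities to their values at the arithmetic points $t\in R_G\setminus G_2(u)$, where the clean estimate of \lemref{Lem:h_n__g_n} is unavailable because $|t|>2$ need not hold---and it is settled cleanly by the factorization $h_m=\alpha^{2^m}+\beta^{2^m}$ together with $|\alpha|<1<|\beta|$.
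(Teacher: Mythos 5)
Your proposal is correct and follows the same route as the paper, whose own proof of this theorem is a one-sentence summary: Lemmas \ref{Lem:AbsoluteValuesOfRoots}, \ref{Lem:OpenDiscsIncludingRoots}, \ref{Lem:HCFGExpansions} give (1)--(2), specialization of \thmref{Thm:IdentitiesAsRationalFunctions} gives (3), and \lemref{Lem:SpeedOfConvergence} gives (4). What you genuinely add is at the specialization step, about which the paper is silent: you correctly isolate the nonvanishing of $h_m(t,u)$ as the crux and settle it with the closed form $h_m=\alpha^{2^m}+\beta^{2^m}$ together with $|\alpha|<1<|\beta|$. This matters precisely because \lemref{Lem:h_n__g_n} is unavailable in the generality of assertion (3): for $t\in R_G\setminus G_2(u)$ one may have $|t|=2$ (e.g.\ $u=-1$, $t=2$), so the paper has no lower bound on $|h_m|$ there. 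Your two smaller deviations are also sound: you obtain irreducibility from (H1) and the non-terminating expansion of \lemref{Lem:HCFGExpansions}, whereas the remark after \lemref{Lem:OpenDiscsIncludingRoots} gets it more directly (a $K_G$-rational root of the monic $f\in R_G[X]$ would be an algebraic integer, hence in $R_G$, impossible for $0\neq\alpha\in B(0,1/2)$); and you verify the digit admissibility $a_n,b_n\notin\{0,\pm 1,\pm\sqrt{-1}\}$, which the paper leaves implicit. One refinement worth recording: your parenthetical that ``each Newton iterate $F^{(k)}(0)$ is a well-defined finite complex number'' is justified for the value at $(t,u)$ of the \emph{rational function} $F^{(k)}(0)$, which is how \thmref{Thm:IdentitiesAsRationalFunctions} defines it, so your proof is complete as written; but if one also wants the numerical iteration $z_{k+1}=(z_k^2-u)/(2z_k-t)$, $z_0=0$, never to divide by zero, one must check $2S_{k-1}(t,u)-t\neq 0$, and this follows from your same device: a short computation from \eqref{eq:2ndInduction} gives $\left(2F^{(k)}(0)-T\right)^2=(T^2-4U)h_k^2/\left(h_k^2-4U^{2^k}\right)$ as rational functions, and at $(t,u)$ the right-hand side equals $(\alpha-\beta)^2\left(\alpha^{2^k}+\beta^{2^k}\right)^2/\left(\alpha^{2^k}-\beta^{2^k}\right)^2\neq 0$.
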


\section{Relatively quadratic units over $K_E$}\label{EisensteinField}

We have seen
\thmref{Thm:IdentitiesAsComplexNumbers} related to relatively quadratic units over the Gaussian
field $K_G$
by introducing three finite subsets $G_1(u) \subset G_2(u) \subset
G_3(u)$ of $R_G$ and
by showing Lemmas \ref{Lem:AbsoluteValuesOfRoots}, \ref{Lem:OpenDiscsIncludingRoots} (\ref{Lem:HCFGExpansions}),
and \ref{Lem:SpeedOfConvergence} for $(t,u)\in R_G^2$ satisfying
\[
t \in R_G\backslash G_j(u), \quad u \in \left\{z \in \mathbb{C}; z^4=1\right\},
\]
respectively with $j = 1, 2, 3$.
Likewise, one can prove the following \thmref{Thm:IdentitiesAsComplexNumbersEisenstein} related to
relatively quadratic units over the Eisenstein field
$K_E$
by introducing three finite subsets $E_1(u) \subset E_2(u) \subset
E_3(u)$ of $R_E$ and
by showing three lemmas, say Lemmas
\ref{Lem:AbsoluteValuesOfRoots}',
\ref{Lem:OpenDiscsIncludingRoots}' (\ref{Lem:HCFGExpansions}'), and
\ref{Lem:SpeedOfConvergence}', parallel to Lemmas \ref{Lem:AbsoluteValuesOfRoots},
\ref{Lem:OpenDiscsIncludingRoots} (\ref{Lem:HCFGExpansions}), and
\ref{Lem:SpeedOfConvergence}, respectively for $(t,u)\in R_E^2$ satisfying
\[
t \in R_E\backslash E_j(u), \quad u \in \left\{z \in \mathbb{C}; z^6=1\right\}
\]
with $j = 1, 2, 3$,
where $E_j(u)$ ($j = 1, 2, 3$) are finite subsets of $R_E$ defined by \eqref{eq:E1} -- \eqref{eq:E3}
below:
\begin{equation}\label{eq:E1}
\begin{aligned}
E_1\left(b^0\right) &:=\{0,\pm 1,\pm 2\},\\
E_1\left(b^1\right) &:=\left\{x b+y \overset{\_}{b}; (x,y) =\pm (2,1) \right\} \cup \left\{0\right\},\\
E_1\left(b^2\right) &:=\left\{x b+y \overset{\_}{b}; (x,y) =\pm (1,0),\pm (2,0)\right\} \cup \left\{0\right\},\\
E_1\left(b^3\right) &:=\left\{x b+y \overset{\_}{b}; (x,y)=\pm (1,-1)\right\} \cup \left\{0\right\},\\
E_1\left(b^4\right) &:=\left\{x b+y \overset{\_}{b}; (x,y) =\pm (0,1),\pm (0,2)\right\} \cup \left\{0\right\},\\
E_1\left(b^5\right) &:=\left\{x b+y \overset{\_}{b}; (x,y)=\pm (1,2)\right\} \cup \left\{0\right\},
\end{aligned}
\end{equation}
where $b:=\frac{1+\sqrt{-3}}{2}$, $\overset{\_}{b}:=\frac{1-\sqrt{-3}}{2}$;

\begin{equation}\label{eq:E2}
\begin{aligned}
E_2\left(b^0\right) &:=\left\{x b+y \overset{\_}{b}; (x,y) \in \{\pm (1,2), \pm (2,1),\pm (2,2)\}\right\} \cup E,\\
E_2\left(b^1\right) &:=\left\{x b+y \overset{\_}{b}; (x,y) \in \{\pm (2,0), \pm (2,1),\pm (2,2)\}\right\} \cup E,\\
E_2\left(b^2\right) &:=\left\{x b+y \overset{\_}{b}; (x,y) \in \{\pm (1,-1), \pm (2,0), \pm (2,1)\} \right\} \cup E,\\
E_2\left(b^3\right) &:=\left\{x b+y \overset{\_}{b}; (x,y) \in \{\pm (0,2),\pm (1,-1),  \pm (2,0)\}\right\} \cup E,\\
E_2\left(b^4\right) &:=\left\{x b+y \overset{\_}{b}; (x,y) \in \{\pm (0,2), \pm (1,-1), \pm (1,2)\} \right\} \cup E,\\
E_2\left(b^5\right) &:=\left\{x b+y \overset{\_}{b}; (x,y) \in \{\pm (0,2), \pm (1,2), \pm (2,2)\} \right\} \cup E,
\end{aligned}
\end{equation}
where $E := \left\{0\right\} \cup \left\{b^j ; 0 \le j \le 5\right\}$,
see Figure~\ref{fg:E2};

\begin{figure}
\begin{center}
\includegraphics[width=0.4\textwidth,clip]{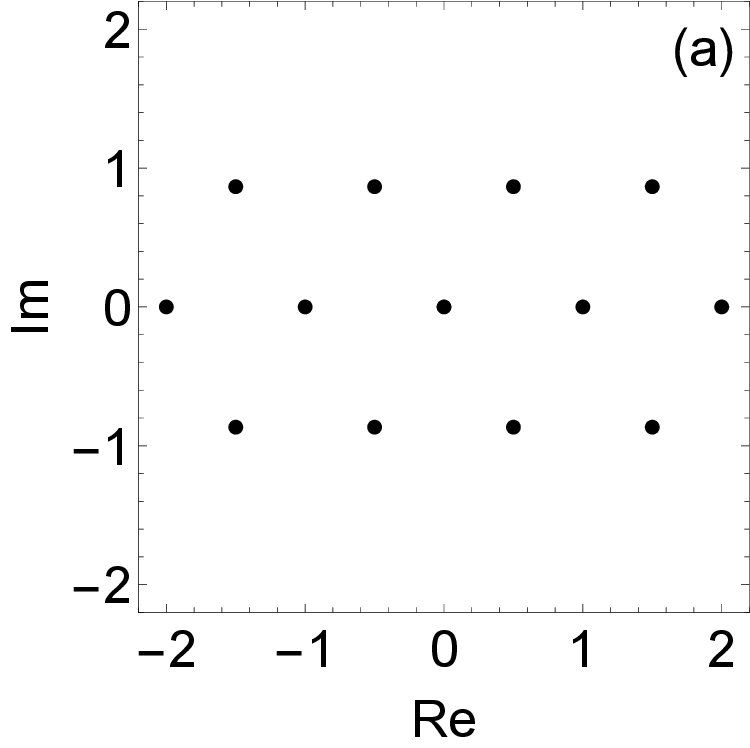}
\hspace*{0.05\textwidth}\includegraphics[width=0.4\textwidth,clip]{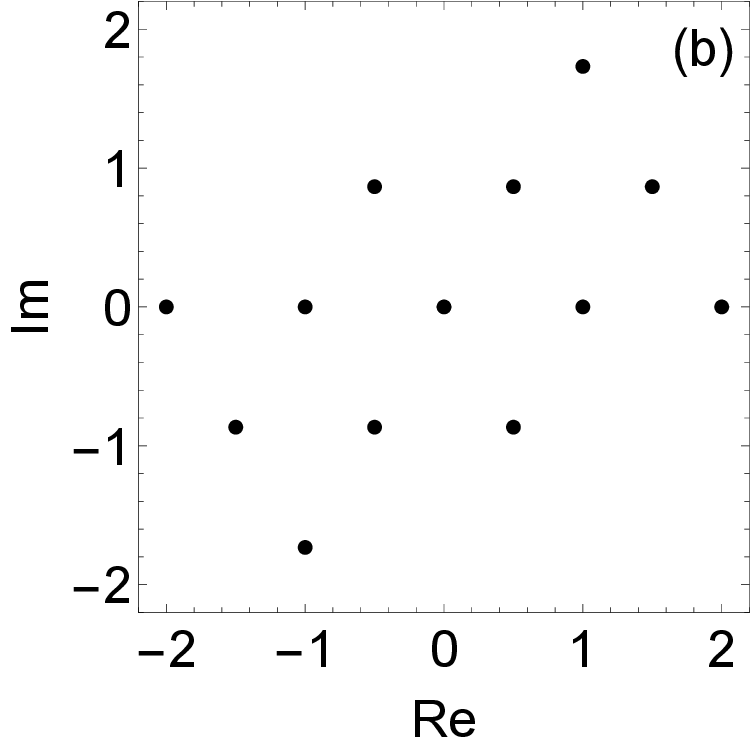}\\
\vspace*{0.025\textwidth}
\includegraphics[width=0.4\textwidth,clip]{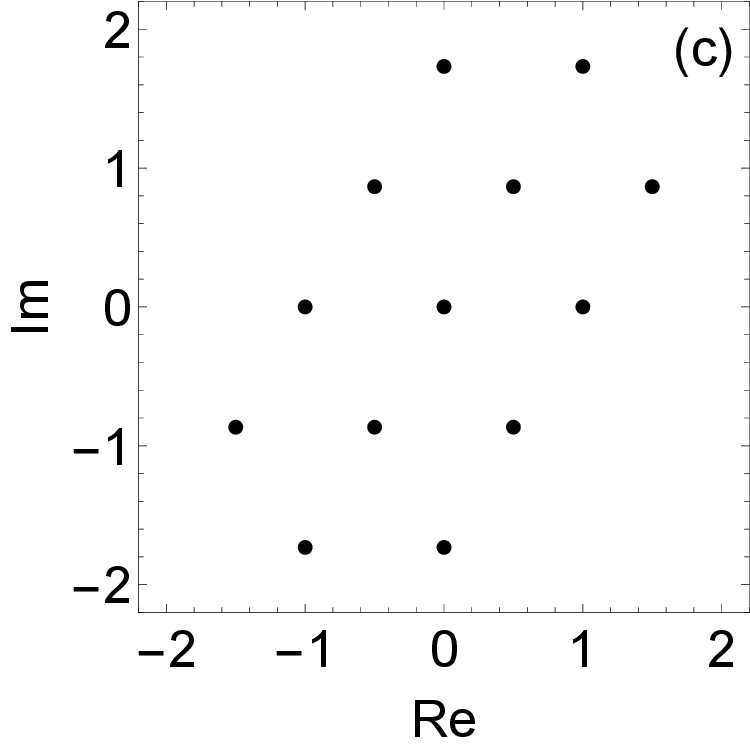}
\hspace*{0.05\textwidth}\includegraphics[width=0.4\textwidth,clip]{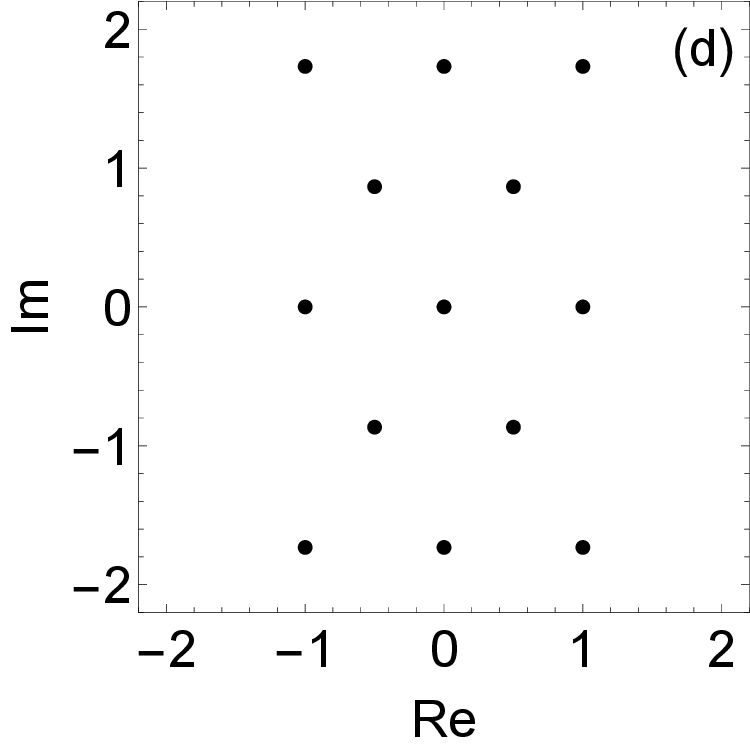}\\
\vspace*{0.025\textwidth}
\includegraphics[width=0.4\textwidth,clip]{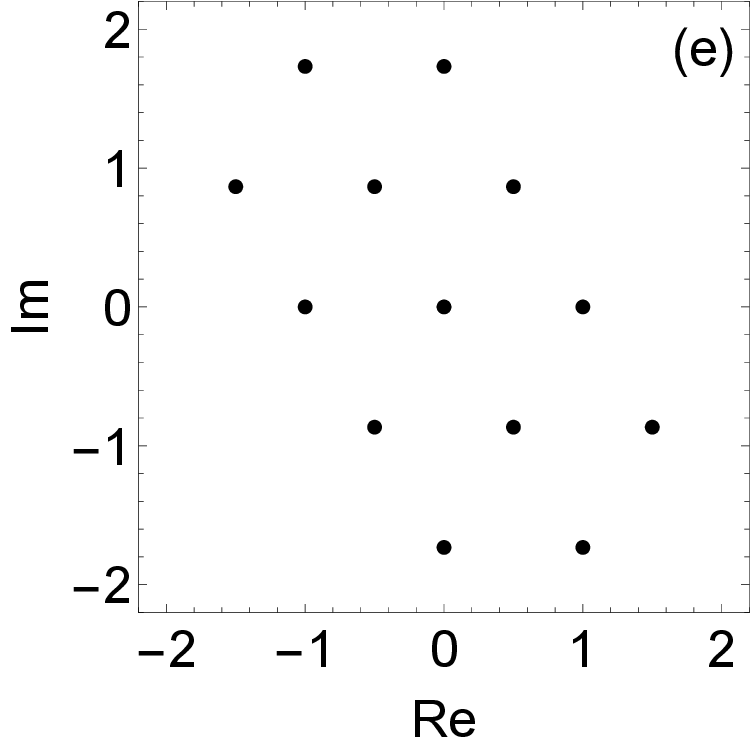}
\hspace*{0.05\textwidth}\includegraphics[width=0.4\textwidth,clip]{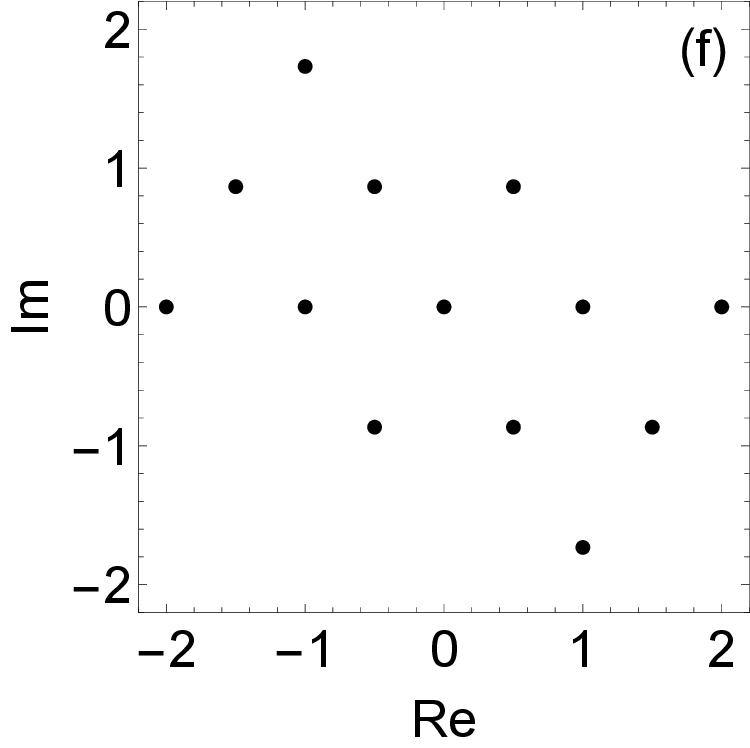}
\end{center}
\caption{\label{fg:E2}
(a) $E_2\left(b^0\right)$,
(b) $E_2\left(b^1\right)$,
(c) $E_2\left(b^2\right)$,
(d) $E_2\left(b^3\right)$,
(e) $E_2\left(b^4\right)$, and
(f) $E_2\left(b^5\right)$.}
\end{figure}

\begin{equation}\label{eq:E3}
E_3\left(u\right) := E_{*} \quad \left( \forall u \in \left\{z \in \mathbb{C}; z^6=1\right\} \right),
\end{equation}
where $E_{*} := \left\{z \in R_E; |z| \le 2\right\}$, see Figure~\ref{fg:E3}.

\begin{figure}
\begin{center}
\includegraphics[width=0.4\textwidth,clip]{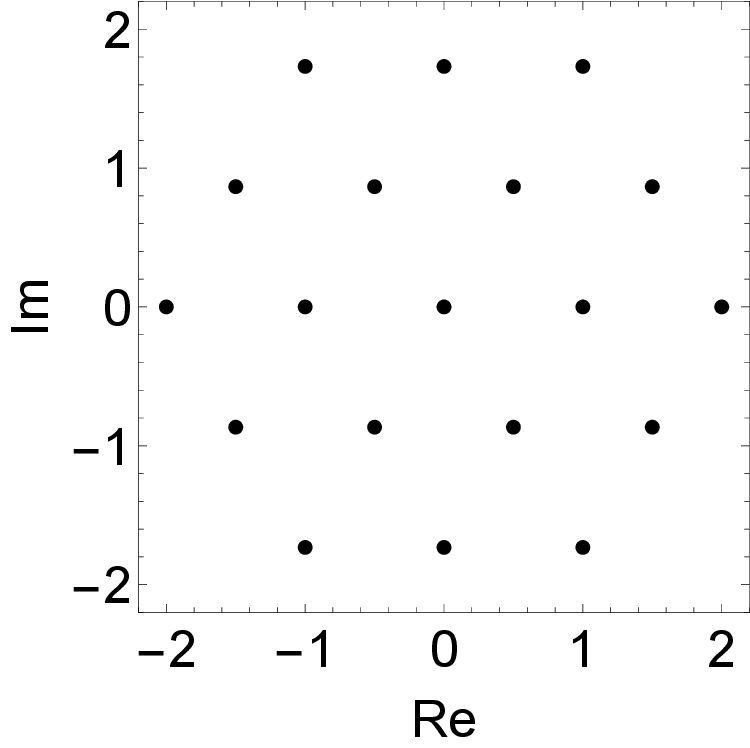}
\end{center}
\caption{\label{fg:E3}
$E_{*}$}
\end{figure}

Summing up Lemmas \ref{Lem:AbsoluteValuesOfRoots}',
\ref{Lem:OpenDiscsIncludingRoots}' (\ref{Lem:HCFGExpansions}'), and
\ref{Lem:SpeedOfConvergence}', we get

\begin{thm}\label{Thm:IdentitiesAsComplexNumbersEisenstein}
Let  $u \in \left\{z \in \mathbb{C}; z^6=1\right\}$, $t \in R_E\backslash
E_2(u)$ with $E_2(u)$
given by \eqref{eq:E2}.  Let  $F(X)$, $F^{(n)}(X)$,  $S_n(T,U)$ be as in
\thmref{Thm:IdentitiesAsRationalFunctions}. Then
\begin{enumerate}[(1)]
\item
$f(X)=X^2-t X+u$ is irreducible over $K_E$ having two roots 
$\alpha$, $\beta$ with $|\alpha |<1/2$, $|\beta -t|<1/2$.

\item
The Hurwitz continued fraction of $\alpha$, $\beta$ are given by
\[
\alpha =\left(HCF_E\right)\left[0;a_1,a_2,\ldots \right]
\]
with $a_n=u^{-1}t$ ($n=\mathrm{odd}\ge 1$), $a_n=-t$ ($n=\mathrm{even}\ge 2$);
\[
\beta = \left(HCF_E\right)\left[b_0;b_1,b_2,\ldots \right]
\]
with $b_n=-u^{-1}t$ ($n=\mathrm{odd}\ge 1$), $b_n=t$ ($n=\mathrm{even}\ge 0$).     

\item
Equalities
\begin{align*}
F^{(n+1)}(0) &=S_n(t,u)=\left(HCF_E\right)\left[0;a_1,a_2,\ldots ,a_{2^{n+1}-1}\right],\\
F^{(n+1)}(t) &=t-S_n(t,u)=\left(HCF_E\right)\left[b_0;b_1,b_2,\ldots ,b_{2^{n+1}-1}\right],
\end{align*}
hold for all $n\ge 0$.

\item
If in  particular, $u \in \left\{z\in \mathbb{C}; z^6=1\right\}$, $t \in R_E\backslash E_3(u)$ with $E_3(u)$
given by \eqref{eq:E3},
then $\left|\alpha -F^{(n)}(0)\right| < \frac{2}{\rho^{2^{n+1}-1}}$, $\left|\beta -F^{(n)}(t)\right|<\frac{2}{\rho^{2^{n+1}-1}}$  are 
valid for all $n \ge 1$, where $\rho =\frac{|t|+\sqrt{|t|^2-4}}{2}>1$.
\end{enumerate}
\end{thm}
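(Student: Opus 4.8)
The plan is to transport the Gaussian argument of \thmref{Thm:IdentitiesAsComplexNumbers} essentially verbatim, isolating the few places where the hexagonal arithmetic of $R_E$ genuinely intervenes. The decisive point is that \thmref{Thm:IdentitiesAsRationalFunctions} is an identity of \emph{rational functions} in the indeterminates $T,U$ over $\mathbb{Q}$; it never refers to $R_G$, $R_E$, or to the arithmetic of the base field. Its members therefore share, under $T\mapsto t\in R_E$, $U\mapsto u$ with $u^6=1$, the common value of the continued-fraction convergent $p_{2^{n+1}-1}(t,u)/q_{2^{n+1}-1}(t,u)$, whose denominator is nonzero once the $HCF_E$ expansion is known to converge. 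The only genuinely analytic caveat is that the \emph{literal} truncated Sierpinski sum be defined, i.e.\ that $h_m(t,u)\neq 0$ for $0\le m\le n$. For this I would record the closed form $h_n(t,u)=\alpha^{2^n}+\beta^{2^n}$, immediate from $h_0=t=\alpha+\beta$ and the recurrence; since the Eisenstein analogue of \lemref{Lem:AbsoluteValuesOfRoots} gives $|\alpha|<1<|\beta|$ for $t\in R_E\setminus E_1(u)\supseteq R_E\setminus E_2(u)$, we have $|\alpha/\beta|<1$, so $h_n(t,u)=0$ would force $(\alpha/\beta)^{2^n}=-1$, which is impossible. Hence every $h_m(t,u)$ is nonzero, and assertion (3) reduces to (1) and (2).

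Next I would establish the three geometric lemmas replacing \lemref{Lem:AbsoluteValuesOfRoots}, \lemref{Lem:OpenDiscsIncludingRoots}/\lemref{Lem:HCFGExpansions}, and \lemref{Lem:SpeedOfConvergence}. For (1) I would prove the Eisenstein analogue of \lemref{Lem:OpenDiscsIncludingRoots}: for a sixth root of unity $u$ and $t\in R_E\setminus E_2(u)$ the roots satisfy $\alpha\in B(0,1/2)$ and $\beta=t-\alpha\in B(t,1/2)$, whence $[\alpha]_E=0$ and $[\beta]_E=t$. Irreducibility over $K_E$ then follows, since $|\alpha|<1/2$ forces $\alpha\notin R_E$ (the only Eisenstein integer of modulus $<1$ is $0$, excluded by $\alpha\beta=u\neq 0$), whereas a reducible monic $f$ would have both roots in $R_E$. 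As in the Gaussian proof the argument splits into a \emph{large-$|t|$ regime}, where one bounds $\left|\arg\left(1-4u/t^2\right)\right|$ and hence $\left|\arg\sqrt{1-4u/t^2}\right|$ through \defnref{Defn:PrincipalValueOfSquareRoot} to conclude $|\alpha|=2/\left(|t|\,\left|1+\sqrt{1-4u/t^2}\right|\right)<1/2$, and a \emph{finite case check} over the remaining $t$, whose failures constitute exactly the set $E_2(u)$ of \eqref{eq:E2}.

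For (2) I would run the Hurwitz algorithm for $R_E$ exactly as in the proof of \lemref{Lem:HCFGExpansions}: from $[\beta]_E=t$ one writes $\beta=t+1/z_1$, verifies $[z_1]_E=-u^{-1}t$ by the same nearest-integer computation, and observes that the two-step pattern repeats, giving the purely periodic expansions $\beta=\left(HCF_E\right)\left[t;-u^{-1}t,t,\ldots\right]$ and $\alpha=1/\beta=\left(HCF_E\right)\left[0;u^{-1}t,-t,\ldots\right]$; convergence to $\alpha,\beta$ is supplied by property (H2) for $HCF_E$. Combined with the specialized identities this yields (3) for every $n\geq 0$. For the error bound (4) I would reproduce the proof of \lemref{Lem:SpeedOfConvergence} without change: both the Eisenstein analogue of \lemref{Lem:h_n__g_n} and the selection of the correct branch of the square root (via $\left|u^{2^n}/h_n^2\right|<1/4$) use only $|t|>2$ and $|u|=1$, and $t\in R_E\setminus E_3(u)$ with $E_3(u)=E_{*}=\{z\in R_E; |z|\le 2\}$ is precisely the condition $|t|>2$, while $|u|=1$ holds for every sixth root of unity.

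The main obstacle is combinatorial rather than conceptual: pinning down and verifying the nested exceptional sets $E_1(u)\subset E_2(u)\subset E_3(u)$ over the hexagonal lattice $R_E$. Unlike the square lattice $R_G$, the nearest-integer (Hurwitz) cells of $R_E$ are regular hexagons, which both shifts the modulus threshold separating the large-$|t|$ regime from the finite check and makes the exceptional sets depend on $u$ in the more intricate, $b$-dependent fashion recorded in \eqref{eq:E1}--\eqref{eq:E2} for each of the six units $u\in\left\{b^0,\ldots ,b^5\right\}$. The finite verifications---that $|\alpha|<1/2$ and that the Hurwitz digits emerge as claimed---must be carried out for all six values of $u$ and all $t$ in a suitable bounded region of $R_E$; this is the one step that genuinely requires the Eisenstein geometry and is not a mere transcription of the Gaussian computation.
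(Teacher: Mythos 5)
Your proposal takes essentially the same route the paper prescribes for \thmref{Thm:IdentitiesAsComplexNumbersEisenstein}: specialize the purely formal identities of \thmref{Thm:IdentitiesAsRationalFunctions} and re-prove Eisenstein analogues of \lemref{Lem:AbsoluteValuesOfRoots}, \lemref{Lem:OpenDiscsIncludingRoots} (\lemref{Lem:HCFGExpansions}), and \lemref{Lem:SpeedOfConvergence}, with the finite lattice checks over $R_E$ producing exactly the exceptional sets $E_1(u)\subset E_2(u)\subset E_3(u)$ of \eqref{eq:E1}--\eqref{eq:E3}; your closed form $h_n=\alpha^{2^n}+\beta^{2^n}$ justifying the non-vanishing of the denominators is a welcome point the paper leaves implicit. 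One cosmetic slip: for general $u$ one has $\alpha=u/\beta$ rather than $1/\beta$ (equality holds only for $u=1$, the case the paper itself details), but the resulting expansion $\alpha=\left(HCF_E\right)\left[0;u^{-1}t,-t,\ldots\right]$ and the rest of your argument are unaffected.
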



\subsection*{Acknowledgment}
This research was supported by JSPS KAKENHI Grant Numbers
JP22K12197 and JP25K15277.

\end{document}